\theoremstyle{plain}
\newtheorem{thm}{Theorem}[section]
\newtheorem{lem}[thm]{Lemma}
\newtheorem{cor}[thm]{Corollary}
\theoremstyle{definition}
\newtheorem{de}[thm]{Definition}
\theoremstyle{remark}
\newtheorem{rem}[thm]{Remark}
\numberwithin{equation}{section}
\def \N {\mathbb N}
\def \Z {\mathbb Z}
\def \F {\mathcal{F}}
\def \X {\mathcal{X}}
\def \O {\mathcal{O}}
\def \n {\mathcal{N}}
\def \PG {{{\bf P}\Gamma}}
\def \D {\Delta}
\begin{document}

\title[]{Topological correspondence of multiple ergodic averages of nilpotent group actions}%: weakly mixing case}correspondence

\author{Wen Huang}
\author{Song Shao}
\author{Xiangdong Ye}

\address{Wu Wen-Tsun Key Laboratory of Mathematics, USTC, Chinese Academy of Sciences and
Department of Mathematics, University of Science and Technology of China,
Hefei, Anhui, 230026, P.R. China.}

\email{wenh@mail.ustc.edu.cn}\email{songshao@ustc.edu.cn}\email{yexd@ustc.edu.cn}

\subjclass[2010]{Primary: 37B20, Secondary: 37B05, 37A25}

\keywords{Multiple ergodic averages, multiple recurrence, weak mixing, nilpotent group}

\thanks{Huang is partially supported by NNSF for Distinguished Young Schooler (11225105),
and all authors are supported by NNSF of China (11371339, 11431012, 11571335).}

\date{Feb. 1, 2016}
\date{May, 4, 2016}
\date{June 10, 2016}
\date{Oct. 7, 2016}

\begin{abstract}
%In this paper, we study the topological correspondence of multiple ergodic averages of weakly mixing
%systems of nilpotent group actions.
Let $(X,\Gamma)$ be a topological system, where $\Gamma$ is a nilpotent
group generated by $T_1,\ldots, T_d$  such that for each $T\in \Gamma$, $T\neq  e_\Gamma$,
$(X,T)$ is weakly mixing and minimal. For $d,k\in \N$,
let $p_{i,j}(n), 1\le i\le k, 1\le j\le d$
%,\ldots, p_{1,d}(n)$; $p_{2,1}(n),\ldots, p_{2,d}(n)$; $\ldots$; $p_{k,1}(n),\ldots, p_{k,d}(n)$
be polynomials with rational coefficients taking integer values on the integers and $p_{i,j}(0)=0$. We show that if the expressions
%\begin{equation*}
$g_i(n)=T_1^{p_{i,1}(n)}\cdots T_d^{p_{i,d}(n)}$
%\end{equation*}
depends nontrivially on $n$ for $i=1,2,\cdots,k$, and for all $i\neq j\in \{1,2,\ldots,k\}$ the expressions $g_i(n)g_j(n)^{-1}$ depend nontrivially on $n$,
then there is a residual set $X_0$ of $X$ such that for all $x\in X_0$
\begin{equation*}
  \{(g_1(n)x, g_2(n)x,\ldots, g_k(n)x)\in X^k:n\in \mathbb{Z}\}
\end{equation*}
is dense in $X^k$.
%One interesting corollary is as follows: If $(X,T)$ is a minimal weakly mixing system and $p(n)\in \Z[n]$,
%then there is a residual set $X_0\subseteq X $ such that for each $x\in X_0$, $\{T^{p(n)}x: n\in \mathbb{N} \}$ is dense in $X$.

\end{abstract}

\maketitle

\markboth{Topological correspondence of multiple ergodic averages}{W. Huang, S. Shao and X. Ye}

%\newpage

%\tableofcontents \settocdepth{section}

%\newpage

\section{Introduction}

Measurable dynamics and topological dynamics are two sister branches of the theory of dynamical systems,
who use similar words to describe different but parallel notions in their respective theories. The surprising fact
is that many of the corresponding results are rather similar though the proofs may be quite different. For the interplay
between measurable and topological dynamics, we refer to the survey by Glasner and Weiss \cite{GW06}. In this paper,
we study the topological analogue of multiple ergodic averages of weakly mixing systems under nilpotent group actions.

\subsection{Main results}\
\medskip

Motivated by the work of Furstenberg on the multiple recurrence theorem \cite{F77}, in his pioneer work Glasner
presented in \cite{G94} the counterpart of \cite{F77} in topological dynamics. As it is said
in \cite{G94}: ``The basic problem in both the measure theoretical and the topological theory is roughly
the following: given a system $(X,T)$ (ergodic or minimal) and a positive integer $n$, describe the most
general relation that holds for $(n+1)$-tuples $(x,Tx,T^2x,\ldots, T^nx)$ in the product space
$X\times X\times \ldots \times X$ ($n+1$ times).'' One of the main results in \cite{G94} is that: for a topologically weakly mixing
and minimal system $(X,T)$, there is a dense $G_\delta$ subset $X_0$ such that for each
$x\in X_0$, $(T^nx, \ldots, T^{dn}x)$ is dense in $X^d$. Note that a different proof of Glasner's theorem
on weakly mixing systems was presented in \cite{KO, Moo}.

\medskip

In this paper we extend this result to a much broader setting. Let $\mathcal{P}$ be the collection of all polynomials
with rational coefficients taking
integer values on the integers, $\mathcal{P}_0$ be the collection of elements $p$ of $\mathcal{P}$ with $p(0)=0$, and
$\mathcal{P}_0^*$ be  the collection of non-constant elements of $\mathcal{P}_0$.
The main results of this paper are the following:

\begin{thm}\label{main-thm}
Let $(X,\Gamma)$ be a topological system, where $\Gamma$ is a nilpotent group  such that for each $T\in \Gamma$,
$T\neq e_\Gamma$, is weakly mixing and minimal. For $d,k\in \N$ let $T_1,\ldots,T_d\in \Gamma$,
$\{p_{i,j}(n)\}_{1\le i\le k, 1\le j\le d}\in \mathcal{P}_0$ %be polynomials with rational coefficients taking integer values on the integers and $p_{i,j}(0)=0$
such that the expression
\begin{equation*}
  g_i(n)=T_1^{p_{i,1}(n)}\cdots T_d^{p_{i,d}(n)}
\end{equation*}
depends nontrivially on $n$ for $i=1,2,\ldots, k$, and for all $i\neq j\in \{1,2,\ldots,k\}$ the expressions $g_i(n)g_j(n)^{-1}$
depend nontrivially on $n$.
%(that is, all $g_i(n)$ and $g_i(n)g_j(n)^{-1}$ for $i\neq j$ are nonconstant mappings of $\Z$ into $\Gamma$).
Then there is a dense $G_\delta$ subset $X_0$ of $X$ such that for all $x\in X_0$
\begin{equation*}
\{(g_1(n)x,\ldots,g_k(n)x):n\in\mathbb{Z}\}
%  \{(T_1^{p_{1,1}(n)}\cdots T_d^{p_{1,d}(n)}x,T_1^{p_{2,1}(n)}\cdots T_d^{p_{2,d}(n)}x,\ldots,T_1^{p_{k,1}(n)}\cdots T_d^{p_{k,d}(n)}x)\in X^k:n\in \Z\}
\end{equation*}
is dense in $X^k$.
\end{thm}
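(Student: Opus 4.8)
The plan is to first reduce the statement to a single ``multiple transitivity'' assertion about open sets, and then to prove that assertion by a PET-type induction in which weak mixing supplies the base case and a topological differencing lemma drives the inductive step. First I would carry out the Baire category reduction. Fix a countable base $\{V_m\}_{m\in\N}$ of nonempty open sets of $X^k$ and, for each $n\in\Z$, let $\Phi_n\colon X\to X^k$ be the continuous map $\Phi_n(x)=(g_1(n)x,\ldots,g_k(n)x)$. The set of $x$ whose orbit is dense is precisely
\[
X_0=\bigcap_{m\in\N}\ \bigcup_{n\in\Z}\Phi_n^{-1}(V_m),
\]
which is a $G_\delta$ set since each $\Phi_n$ is continuous. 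By the Baire category theorem it therefore suffices to show that each open set $\bigcup_n\Phi_n^{-1}(V_m)$ is dense, i.e. that for all nonempty open $U,V_1,\ldots,V_k\subseteq X$ there exists $n\in\Z$ with
\[
U\cap g_1(n)^{-1}V_1\cap\cdots\cap g_k(n)^{-1}V_k\neq\emptyset.
\]
I will refer to this intersection property as $(\ast)$.

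The heart of the argument is to prove $(\ast)$, and here I would mirror Bergelson's weak mixing PET scheme in the topological category. I would attach to the family $\{g_i\}_{i=1}^k$ a complexity (its PET weight), well-order these complexities, and induct. Using Leibman's calculus of polynomial maps into a nilpotent group, one checks that the expressions $g_i(n)$ and $g_i(n)g_j(n)^{-1}$ are genuine polynomial sequences in $\Gamma$, so that the two non-degeneracy hypotheses are intrinsic to the family; one then reduces the nilpotency class by passing to the lower central series, so that in the base case the surviving transformations effectively commute. The base case --- when the remaining expressions are built from a single nontrivial $T\in\Gamma$ with linearly independent exponents --- follows directly from weak mixing: topological weak mixing of $(X,T)$ makes $(X\times\cdots\times X,T\times\cdots\times T)$ transitive, and this simultaneous transitivity yields $(\ast)$ for a common $n$.

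The inductive step is where I expect the main obstacle to lie. In place of van der Corput's $L^2$ inequality I would prove a topological differencing lemma: if $(\ast)$ holds, for a suitably large set of shift parameters, for the ``differenced'' family obtained by replacing $n$ with $n+h$, comparing each expression with a distinguished lowest-weight one, and letting $h$ vary, then $(\ast)$ holds for the original family. Carrying this out topologically requires tracking finite intersections of open sets through the substitution and invoking recurrence of the differenced system along the parameter $h$. The delicate point is to verify that differencing strictly lowers the PET complexity while preserving both non-degeneracy conditions --- that each new expression and each new ratio still depends nontrivially on $n$ --- since it is exactly these conditions, together with weak mixing, that prevent the orbit from collapsing into a proper closed invariant subset of $X^k$. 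Reconciling this bookkeeping with the non-commutativity of $\Gamma$, so that the commutator terms produced by the substitution are absorbed into the polynomial normal form without destroying the inductive ordering, is the technical crux of the proof.
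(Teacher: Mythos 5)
Your overall architecture (Baire category reduction to the open-set statement $(\ast)$, then PET-induction with a topological differencing step in place of van der Corput) is the same as the paper's, but two of your steps would fail as described. First, the base case. Weak mixing of $(X,T)$ gives transitivity of $T\times\cdots\times T$, i.e. a common $n$ with $U_i\cap T^{-n}V_i\neq\emptyset$ for all $i$; it does \emph{not} give $(\ast)$ for a family of distinct linear exponents, which requires a \emph{single} point $x\in U$ with $T^{a_in}x\in V_i$ simultaneously. That statement is Glasner's theorem, and it is already nontrivial: the paper proves it (Case 1 of Section 4) by an induction on the number of linear polynomials that uses minimality essentially --- cover $X=\bigcup_{j=0}^{\ell}T^{j}U$, run the differenced system inside $V_1$, and translate back into $U$ by one of the finitely many powers $T^{j}$, absorbing the shift into a perturbation $n=m+k_{b_m}$ prepared in advance by Lemma~\ref{freedom}. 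The genuine base case of the PET-induction is a \emph{single} linear $\Gamma$-polynomial, where Lemma~\ref{lem-hy} applies. (Relatedly, the induction is on the weight vector built from a Malcev basis, not on the nilpotency class via the lower central series.)

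Second, and more seriously, the inductive statement you propose --- bare existence of $n$ satisfying $(\ast)$ --- is too weak to close the induction. In the differencing step one must choose shift parameters $k_0,\ldots,k_\ell$ with prescribed lower bounds on their gaps (so that the differenced polynomials $f_t(k_j)^{-1}f_t(n+k_j)f(n)^{-1}$ remain pairwise distinct and nontrivial, cf.\ Lemmas~\ref{lem-equi1}--\ref{lem-equi2}), and one must intersect the "good" sets of $n$ coming from several open pairs at once. The paper handles this by strengthening the inductive hypothesis to a pair of statements carried simultaneously: the product-type set is \emph{thickly-syndetic} (so that finitely many of them can be intersected, the family being a filter) and the diagonal-type set is \emph{syndetic} (so that the translates $\{m+k_{b_m}\}$ remain syndetic and feed back into the thickly-syndetic claim). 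Without this strengthening the bookkeeping you describe --- "a suitably large set of shift parameters" and "recurrence of the differenced system along $h$" --- cannot be made to work, especially when the family mixes linear and nonlinear polynomials; the paper states explicitly that this is why Theorem~\ref{main-thm} is deduced from the stronger Theorem~\ref{stronger-thm}. You correctly identify the need to re-verify non-degeneracy after differencing, which the paper does via the additional Lemmas~\ref{lem-diff} and~\ref{lem-equi2}, but the quantitative largeness of the return-time sets is the missing ingredient that makes the induction go through.
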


We remark that the non-degeneracy conditions stated in the above theorem is also necessary.
Note that we say that $g(n)$ {\it depends nontrivially on $n$}, if $g(n)$ is a nonconstant mapping from $\Z$ into $\Gamma$,
and $g_1(n), g_2(n)$ are {\it distinct} if $g_1(n)g_2^{-1}(n)$ depends nontrivially on $n$.
When $\Gamma$ is abelian, one has that
\begin{equation*}
  g_i(n)g_j(n)^{-1}=T_1^{p_{i,1}(n)-p_{j,1}(n)}\cdots T_d^{p_{i,d}(n)-p_{j,d}(n)}.
\end{equation*}
When $\Gamma$ is nilpotent, the expressions of $g_i(n)$ and $g_i(n)g_j(n)^{-1}$ depend on the Malcev basis of $\Gamma$
(see Section \ref{section-nil}).

Taking $\Gamma=\Z$ and $d=1$ in Theorem \ref{main-thm}, we have the result for one transformation.

\begin{thm}\label{main-1}
Let $(X,T)$ be a weakly mixing minimal system and $p_1, \ldots, p_d\in \mathcal{P}_0^*$ be
distinct polynomials. % taking integer values at integers and $p_i(0)=0, i=1,\ldots,d$.
Then there is a dense $G_\delta$ subset $X_0$ of $X$ such that for any $x\in X_0$
$$\{(T^{p_1(n)}(x), \ldots, T^{p_d(n)}(x)):n\in\mathbb{Z}\}$$
is dense in $X^d$.
\end{thm}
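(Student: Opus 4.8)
The plan is to prove Theorem \ref{main-1} as a special case of Theorem \ref{main-thm}, following the reduction indicated in the excerpt. Taking $\Gamma = \Z$ (so $\Gamma$ is generated by a single transformation $T$) and $d=1$, the hypothesis that $(X,T)$ is weakly mixing and minimal guarantees that every nonidentity element of $\Gamma$ acts as a weakly mixing minimal homeomorphism: indeed, for $T^m$ with $m \neq 0$, weak mixing of $T$ passes to all nonzero powers, and minimality likewise (one must be slightly careful here, but for $m\neq 0$ the system $(X,T^m)$ inherits both properties from weak mixing of $T$, since a weakly mixing system has all powers weakly mixing and hence minimal when the original is minimal). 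With $d=1$, each expression specializes to $g_i(n) = T^{p_i(n)}$, where $p_i \in \mathcal{P}_0^*$.

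Next I would verify the two nondegeneracy conditions of Theorem \ref{main-thm} in this setting. First, each $g_i(n) = T^{p_i(n)}$ depends nontrivially on $n$ precisely because $p_i$ is a \emph{non-constant} element of $\mathcal{P}_0$ (that is, $p_i \in \mathcal{P}_0^*$), so $n \mapsto T^{p_i(n)}$ is a nonconstant map into $\Gamma = \Z$. Second, since $\Gamma$ is abelian, the formula quoted in the excerpt gives
\begin{equation*}
  g_i(n) g_j(n)^{-1} = T^{\,p_i(n) - p_j(n)},
\end{equation*}
which depends nontrivially on $n$ exactly when $p_i - p_j$ is non-constant, i.e.\ when $p_i \neq p_j$ as polynomials. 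This is guaranteed by the hypothesis that $p_1, \ldots, p_d$ are \emph{distinct}. Thus all hypotheses of Theorem \ref{main-thm} are met.

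With both nondegeneracy conditions confirmed, Theorem \ref{main-thm} applies directly and produces a dense $G_\delta$ subset $X_0 \subseteq X$ such that for every $x \in X_0$ the set
\begin{equation*}
  \{(T^{p_1(n)}x, \ldots, T^{p_d(n)}x) : n \in \Z\}
\end{equation*}
is dense in $X^d$, which is exactly the conclusion of Theorem \ref{main-1}. Since this is purely a specialization argument, I do not anticipate a genuine obstacle; the only points requiring a moment's care are the two verifications above—namely confirming that $p_i \in \mathcal{P}_0^*$ yields nontrivial dependence and that distinctness of the $p_i$ yields nontrivial dependence of the differences—together with the preliminary check that nonidentity powers of $T$ remain weakly mixing and minimal, so that the standing hypothesis on $\Gamma$ in Theorem \ref{main-thm} is satisfied.
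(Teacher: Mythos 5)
Your proposal is correct and is exactly the paper's route: the authors obtain Theorem \ref{main-1} by the one-line specialization ``taking $\Gamma=\Z$ and $d=1$ in Theorem \ref{main-thm},'' and your verifications (total weak mixing and total minimality of nonzero powers, nontrivial dependence from $p_i\in\mathcal{P}_0^*$, and nontriviality of $T^{p_i(n)-p_j(n)}$ from distinctness plus $p_i(0)=p_j(0)=0$) are precisely the checks that make that specialization legitimate.
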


\subsection{Multiple ergodic averages for weakly mixing systems}\
\medskip

Now we state some corresponding results in ergodic theory.
For a weakly mixing system, Bergelson and Leibman \cite[Theorem D]{BL96} showed the following result:
Let $(X,\X,\mu,\Gamma)$ be a measure preserving system, where $\Gamma$ is an abelian group such
that for each $T\in \Gamma$, $T\neq e_\Gamma$, is weakly mixing. For $d,k\in \N$, let $T_1,\ldots,T_d\in \Gamma$,
and $p_{i,j}\in \mathcal{P}_0, 1\le i\le k, 1\le j\le d$  %(n),\ldots, p_{1,d}(n)$; $p_{2,1}(n),\ldots, p_{2,d}(n)$; $\ldots$; $p_{k,1}(n),\ldots, p_{k,d}(n)$
%be polynomials with rational coefficients taking integer values on the integers
such that the expressions $g_i(n)$ satisfies the non-degeneracy conditions stated in Theorem \ref{main-thm}.
%\begin{equation*}
% $ g_i(n)=T_1^{p_{i,1}(n)}\cdots T_d^{p_{i,d}(n)}, \quad i=1,2,\ldots, k,$
%\end{equation*}
%and for all $i\neq j\in \{1,2,\ldots,k\}$ the expressions
%$ g_i(n)g_j(n)^{-1}$
%depend nontrivially on $n$.
Then for any $f_1,\ldots, f_k\in L^\infty(X,\mu)$,
\begin{equation*}
  \lim_{N\to\infty}\Big\|\frac{1}{N}\sum_{n=0}^{N-1}\prod_{i=1}^k f_i(T_1^{p_{i,1}(n)}\cdots T_d^{p_{i,d}(n)}x)-\prod_{i=1}^k\int_Xf_i(x)d\mu\Big\|_{L^2}=0.
\end{equation*}
Related results were proved for nilpotent group actions by Leibman \cite[Theorem 11.15]{Leibman98}.
Note that topological and measurable multiple recurrent theorems under nilpotent group actions were
also studied in \cite{BL96, Leibman94, ZK}.

\medskip

It is natural to conjecture that the result above is still valid for the pointwise convergence, i.e. for weakly mixing
nilpotent group actions, we conjecture that for a subset $X_0$ with full measure, and each $x\in X_0$
the averages $$\frac{1}{N}\sum_{n=1}^N\prod_{j=1}^k f_j(T_1^{p_{1,j}(n)}T_2^{p_{2,j}(n)}\cdots T_d^{p_{d,j}(n)}x)$$
converge to the product of the integrals if $g_i(n)=T_1^{p_{i,1}(n)}\cdots T_d^{p_{i,d}(n)}, \ i=1,2,\ldots, k$
satisfy the obvious non-degeneracy condition. In this paper in some sense we add an evidence to support this conjecture, i.e. we give a
topological correspondence of multiple ergodic averages of nilpotent weakly mixing group actions.

\medskip

Finally we say a few more words on multiple ergodic averages.
Followed from Furstenberg's beautiful work \cite{F77} on the dynamical proof of Szemer\'{e}di's theorem in 1977,
problems concerning the convergence of
multiple ergodic averages (or called ``non- conventional averages'' \cite{F88, F10}) in $L^2$ or pointwisely attract a lot of attention.
Nowadays we have rich results for the $L^2$-norm convergence \cite{HK05, Tao, Walsh, Z}.
%For example,
%Walsh \cite{Walsh} showed that: let $\Gamma$ be a nilpotent group of measure preserving transformations of
%a probability space $(X,\X,\mu)$. Then, for every $T_1,T_2,\ldots, T_d\in \Gamma$,
%the averages $$ \frac{1}{N}\sum_{n=1}^N\prod_{j=1}^k (T_1^{p_{1,j}(n)} T_2^{p_{2,j}(n)} \cdots T_d^{p_{d,j}(n)})f_j$$
%always converge in $L^2(X,\X,\mu)$ for every $f_1,\ldots,f_k\in L^\infty(X,\X,\mu)$ and every
%set of integer valued polynomials $p_{i,j}$.
On the other hand, there are a few results related to the pointwise convergence of multiple ergodic averages.
Bourgain showed that the limit of $\frac{1}{N}\sum_{n=0}^{N-1} f(T^{p(n)}x)$ exists a.e. for all integer
valued polynomials $p(n)$ and $f\in L^{p}(X,\X,\mu)$ with $p>1$ \cite{B89}, and the averages
$\frac{1}{N} \sum_{n=0}^{N-1}f_1(T^{a_1n}x)f_2(T^{a_2n}x)$ converge a.e. for $a_1, a_2\in \Z$
and all $f_1,f_2$ in $L^\infty(X,\X,\mu)$. Huang, Shao and Ye \cite{HSY} showed that for all distal systems,
$ \frac 1 N\sum_{n=0}^{N-1}f_1(T^nx)\ldots f_d(T^{dn}x)$ exists a.e. for all $f_1$, $\ldots$,
$f_d$ $\in$ $L^\infty(X,\X,\mu)$, where $d\in \N$. Very recently,
 Donoso and Sun in \cite{DS16} generalized the above result to commuting distal transformations.
%We refer \cite{GHSY16} for more discussion on the topic.

\medskip

In \cite{G94} it was also showed that, up to a canonically defined proximal
extension, a characteristic family for $T\times T^2\times \ldots \times T^n$, is the family of canonical
PI flows of class $n-1$. In particular, when $(X, T)$ is minimal and
distal, most $T\times T^2\times \ldots \times T^n$ orbit closures of points $(x,x,\ldots,x)$ in the diagonal
of $X^n$ are lifts of the corresponding orbit closures in the largest class-$(n-1)$ factor.
In view of this fact and the recent progress related to the convergence of multiple ergodic averages,
it is an interesting question how to formulate and prove the counterpart in topological dynamics for nilpotent group actions.
In this paper we have investigated the weak mixing case, and we plan to treat
the non-weakly mixing system in the future research.

\subsection{Strategy of the proofs and further results}\
\medskip

To prove Theorem \ref{main-thm} we use PET-induction, which was introduced by Bergelson in \cite{Bergelson87}.
The PET-induction we use in the current paper is due to Leibman \cite{Leibman94}. The basic idea of
this induction is that: we associate any finite collection of polynomials a "complexity", and reduce the complexity
at some step to the trivial one. Note that in some step, the cardinal number of the collection may increase while the complexity decreases.

It is easy to show  that to prove Theorem \ref{main-thm}, it is equivalent to
prove that for any given non-empty open subsets $U, V_1,\ldots, V_k$ of $X$,
\begin{equation}\label{syndetic-1}
\{n\in \Z: U\cap (g_1(n)^{-1}V_1\cap \ldots \cap g_k(n)^{-1}V_k)\not=\emptyset\}
\end{equation}
is infinite (Lemma \ref{diagonal1}). Basically, this can be done by proving a proposition related to the weakly mixing property
(Lemma \ref{lem-KO}) and the fact that for all non-empty open sets $U_1,\ldots, U_k$ and $V_1,\ldots,V_k$ of $X$
\begin{equation}\label{st-1}
  \{n\in \mathbb{Z}: U_1\times \ldots \times U_k \cap g_1(n)^{-1}\times \ldots\times g_k(n)^{-1}(V_1\times \ldots \times V_k)\neq \emptyset\}
\end{equation}
is infinite. Practically, when doing this, we find that if in the collection of polynomials there are linear elements and other non-linear
elements, the argument will be very much involved. To overcome this difficulty, we actually show that for non-empty open subsets $U,V$
and a $\Gamma$-polynomial $g(n)$, $\{n\in \mathbb{Z}: U \cap g(n)^{-1}(V)\neq \emptyset\}$ is thickly-syndetic. Since the family of thickly-syndetic
subsets is a filter, this implies (\ref{st-1}). To prove this, we
need to show that (\ref{syndetic-1}) is syndetic. This means that the proof of Theorem \ref{main-thm} is achieved by showing the following stronger result:

\begin{thm}\label{stronger-thm}
Let $(X,\Gamma)$ be a topological system, where $\Gamma$ is a nilpotent group  such that for each $T\in \Gamma$,
$T\neq e_\Gamma$, is weakly mixing and minimal. For $d,k\in \N$ let $T_1,\ldots,T_d\in \Gamma$,
$\{p_{i,j}(n)\}_{1\le i\le k, 1\le j\le d}\in \mathcal{P}_0$ %be polynomials with rational coefficients taking integer values on the integers and $p_{i,j}(0)=0$
such that the expression
\begin{equation*}
  g_i(n)=T_1^{p_{i,1}(n)}\cdots T_d^{p_{i,d}(n)}
\end{equation*}
depends nontrivially on $n$ for $i=1,2,\ldots, k$, and for all $i\neq j\in \{1,2,\ldots,k\}$ the expressions $g_i(n)g_j(n)^{-1}$
depend nontrivially on $n$. Then for all non-empty open sets $U_1,\ldots, U_k$ and $V_1,\ldots,V_k$ of $X$
\begin{equation*}
  \{n\in \mathbb{Z}: U_1\times \ldots \times U_k \cap g_1(n)^{-1}\times \ldots\times g_k(n)^{-1}(V_1\times \ldots \times V_k)\neq \emptyset\}
\end{equation*}
is a thickly-syndetic set, and
\begin{equation*}
\{n\in \mathbb{Z}:  U\cap (g_1(n)^{-1}V_1\cap \ldots \cap g_k(n)^{-1}V_k)\not=\emptyset\}
\end{equation*}
is a syndetic set.
\end{thm}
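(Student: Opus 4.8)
The plan is to prove the two assertions of Theorem \ref{stronger-thm} together by a single induction on the PET-complexity of the system $\{g_1,\ldots,g_k\}$, because they are genuinely intertwined and neither can be isolated cleanly. First I would record the elementary observation that the product set \eqref{st-1} is exactly the intersection $\bigcap_{i=1}^k\{n\in\Z:U_i\cap g_i(n)^{-1}V_i\neq\emptyset\}$, since a product of open sets is nonempty precisely when every factor is. As the thickly-syndetic sets form a filter, the thickly-syndetic half of the theorem reduces to the single-polynomial statement that for one nontrivial $\Gamma$-polynomial $g$ and open $U,V$ the return set $R(U,g,V)=\{n:U\cap g(n)^{-1}V\neq\emptyset\}$ is thickly-syndetic. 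Moreover, if $n$ lies in the diagonal set \eqref{syndetic-1} for the shifted family $\{g(n+j)\}_{j=0}^{N}$ (with $U$ and all $V_i=V$), then the whole interval $[n,n+N]$ lies in $R(U,g,V)$; hence single-polynomial thick-syndeticity follows from the syndetic diagonal statement applied to the shifts, provided these satisfy the non-degeneracy hypotheses. Running the other way, once the product recurrence \eqref{st-1} is known, the weak-mixing input (Lemma \ref{lem-KO}) upgrades this off-diagonal recurrence to the diagonal recurrence \eqref{syndetic-1}. These three deductions form an apparent cycle, so the whole point is to stratify them by complexity so that each downward implication strictly decreases the weight and the induction is well-founded.

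For the induction itself I would follow Leibman's PET formalism for nilpotent groups: write each $g_i$ in Malcev coordinates, assign it a degree/type, and give the system a weight that well-orders all admissible collections. The base case is the situation in which every nontrivial member is essentially linear with pairwise distinct leading terms; here I would invoke Lemma \ref{lem-KO} together with minimality to conclude the diagonal return set is syndetic directly. For the inductive step I would choose a pivot $g_{i_0}$ of minimal degree and pass to the van der Corput differenced system whose members are the $\Gamma$-polynomials $g_i(n+h)g_{i_0}(n)^{-1}$ and $g_i(n)g_{i_0}(n)^{-1}$; Leibman's bookkeeping guarantees that after cancelling the pivot this derived family is again non-degenerate and of strictly smaller weight, so the induction hypothesis supplies its recurrence for a large set of shift parameters $h$. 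The dynamical content of the step is then to stitch these shifted recurrences back together along the pivot orbit, using the group multiplication, minimality, and the filter property of thickly-syndetic sets, and to feed in the lower-level product recurrence (obtained from the lower-level diagonal statement via the shift trick and the filter) together with weak mixing to produce the current-level diagonal statement.

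The main obstacle, exactly as flagged in the strategy, is the coexistence of linear and genuinely nonlinear polynomials, and it surfaces in several places. In reducing single-polynomial thick-syndeticity to the diagonal statement, the shifted differences $g(n+j)g(n+j')^{-1}$ are \emph{constant} in $n$ when $g$ is linear, so the shifted family violates pairwise non-degeneracy and the diagonal statement does not apply; the linear case must therefore be disposed of directly from weak mixing, while only for degree at least two do the shifts automatically produce nontrivial lower-degree differences. Inside the PET step, differencing against a linear pivot fails to lower the degree of the other linear terms, so the elimination order must be arranged to remove a nonlinear pivot before the linear terms can interfere, even at the cost of temporarily enlarging the collection (increasing $k$) while still decreasing the weight. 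Finally, because the cancellations $g_i(n+h)g_{i_0}(n)^{-1}$ take place in a non-commutative $\Gamma$, verifying that the derived family remains a non-degenerate collection of $\Gamma$-polynomials of strictly smaller weight is delicate Malcev-coordinate bookkeeping, and it is precisely this verification—rather than any single dynamical estimate—that I expect to be the hardest part of the argument.
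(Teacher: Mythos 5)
Your overall architecture --- a single PET induction carrying both assertions simultaneously, the filter property of thickly-syndetic sets reducing the product statement to a single $\Gamma$-polynomial, Lemma \ref{lem-KO} plus minimality to pass from product recurrence to diagonal recurrence, and a separate direct treatment of the ``linear'' (homomorphism-like) case --- is exactly the paper's. But there is a genuine gap in the leg of your three-step cycle that produces thick syndeticity for a single polynomial. You propose to deduce that $R(U,g,V)$ contains runs of length $N$ by applying the diagonal syndetic statement to the shifted family $\{g(n+j)\}_{j=0}^{N}$. The implication itself is correct, but every $g(n+j)$ is equivalent to $g(n)$ (same weight, same leading coefficient), so this family has weight vector $(1\cdot w(g))$ --- the same as that of $\{g\}$. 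For a system such as $A=\{S_1^{n^2}\}$ the shifted family therefore does not precede $A$, the induction hypothesis does not apply to it, and your stratification collapses into a true circle: thick syndeticity for $g$ needs the diagonal statement at level $w(g)$, which needs the product statement at level $w(g)$, which needs thick syndeticity for $g$. The paper breaks this circle by differencing the shifts against $g(n)$ itself: it applies the diagonal syndetic statement to the family $q_{i,j}(n)=g(k_i+j)^{-1}g(k_i+j+n)g(n)^{-1}$, which by Lemma \ref{lem-gamma}(2),(3b) lies strictly below $g$ in the PET order, and then converts a single good $m$ (with witness $x_m\in V$) into the full run $\{m+k_{b_m}+j:\ 0\le j\le L\}\subseteq N_g(U,V)$ by moving the base point from $V$ to $U$ along the orbit of a central element $T$ (minimality gives $X=\bigcup_{i\le \ell}T^iU$), after pre-correcting the target sets $V_{i,j}$ with a weak-mixing step. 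You already use exactly this differencing for the diagonal claim, so the repair is clear, but as written this step of your plan does not go through.

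A secondary issue: you assert that ``Leibman's bookkeeping guarantees that after cancelling the pivot this derived family is again non-degenerate.'' Corollary \ref{leibman} only guarantees the weight decrease; it does not guarantee that the derived $\Gamma$-polynomials are pairwise distinct and non-trivial, and the paper must prove new lemmas (Lemmas \ref{lem-equi1}--\ref{lem-equi2}) showing that the exceptional shift parameters form finite sets, so that $k_0,\ldots,k_\ell$ can be chosen (growing fast enough, via the gap sequence $r(i)$ fed into Lemma \ref{freedom}) to avoid them. You correctly flag this verification as the hardest part, but it is not available off the shelf. Finally, your worry that one must eliminate a nonlinear pivot before the linear terms is unfounded: differencing against the element of minimal weight always strictly decreases the weight vector, since the pivot's own equivalence class is removed, and this is precisely the pivot the paper chooses.
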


We note that when doing the induction procedure, we need to check the non-degeneracy conditions of the reduced collection.
We find that the known results are not enough to guarantee them, and we should prove additional lemmas whose proofs are presented
in Subsection \ref{add-1}.

\medskip

After we introduce PET-induction in Section \ref{section-nil}, we will explain the main ideas of the proof via proving Theorem \ref{main-1}.
As an application of Theorem \ref{main-thm} we have

\begin{thm}\label{ye00}
Let $(X,\Gamma)$ be a topological system, where $\Gamma$ is a nilpotent group  such that for each $T\in \Gamma$,
$T\neq e_\Gamma$, is weakly mixing and minimal. For $k\in \N$ let $T_1,\ldots,T_k\in \Gamma$,
$\{p_{i}(n)\}_{1\le i\le k}\in \mathcal{P}_0$ %be polynomials with rational coefficients taking integer values on the integers and $p_{i,j}(0)=0$
such that the expression $g(n)=T_1^{p_{1}(n)}\cdots T_k^{p_{k}(n)}$ depends nontrivially on $n$.
%Then for each non-empty open subset $U$
%of $X$, there is $y\in U$ such that
Then there is a dense
$G_\delta$ subset $X_0$ of $X$ such that for each $x\in X_0$ and each non-empty open subset $U$ of $X$
$$N_g(x,U):=\{n\in\Z: g(n)x\in U\}$$
is piecewise syndetic.
\end{thm}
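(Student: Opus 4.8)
The plan is to deduce Theorem \ref{ye00} from the strong density conclusion of Theorem \ref{main-thm} together with a standard Baire-category bookkeeping argument. The key observation is that Theorem \ref{ye00} is the special case $k=1$ of the multiple-recurrence machinery, but dressed up to produce a \emph{piecewise syndetic} hitting-time set rather than merely an infinite or dense one. So the first step is to reduce the pointwise statement about $N_g(x,U)$ to an open-set statement: by Theorem \ref{stronger-thm} applied with $k=1$, for any non-empty open sets $U,V$ the set $\{n\in\Z: U\cap g(n)^{-1}V\neq\emptyset\}$ is syndetic (and in fact the product version is thickly syndetic). The bridge from such open-set recurrence to a residual set of points with piecewise-syndetic return sets is the content I would exploit.

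First I would fix a countable base $\{U_m\}_{m\in\N}$ for the topology of $X$. For each $m$, I want the set of points $x$ with $N_g(x,U_m)$ piecewise syndetic to be residual; intersecting over all $m$ then gives the desired $X_0$. The natural route is to recall the combinatorial characterization: a set $S\subseteq\Z$ is piecewise syndetic iff there is a fixed gap bound $L$ such that $S$ meets every interval of length $L$ inside arbitrarily long intervals; equivalently, piecewise syndetic sets are exactly those that belong to some minimal idempotent's neighborhood filter, or those expressible as an intersection of a syndetic set with a thick set. I would use the thick-plus-syndetic decomposition directly, since Theorem \ref{stronger-thm} already hands me both a syndetic set and a thickly-syndetic set, and a thickly-syndetic set is in particular thick. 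Concretely, for a point $x$ and open $U$, if $x$ returns to $U$ along a set that contains the intersection of a fixed syndetic set with a thick set, then $N_g(x,U)$ is piecewise syndetic.

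The second step is the Baire-category packaging. For fixed $m$ and a fixed bound $L$, let
\begin{equation*}
W_{m,L}=\{x\in X: N_g(x,U_m)\ \text{is}\ L\text{-syndetic on some arbitrarily long intervals}\}.
\end{equation*}
I would show each relevant set is open and dense, so that the countable intersection over $m$ (and over the auxiliary parameters) is residual. Openness follows because the condition $g(n)x\in U_m$ is an open condition in $x$ for each fixed $n$, and the syndeticity/thickness conditions involve only finitely many such $n$ over any finite window. Density is where Theorem \ref{stronger-thm} enters: given any non-empty open $V$, the syndeticity of $\{n: V\cap g(n)^{-1}U_m\neq\emptyset\}$ produces, inside $V$, points whose orbit under $g(n)$ enters $U_m$ along a syndetic set, and minimality of each nontrivial $T\in\Gamma$ lets me propagate this to a dense collection. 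I would use that $g(n)$ depends nontrivially on $n$ so that $g$ is genuinely a nonconstant $\Gamma$-polynomial, which is exactly the hypothesis needed to invoke the $k=1$ case.

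The main obstacle I anticipate is the passage from the open-set recurrence statement, which asserts syndeticity/thick-syndeticity of hitting times between \emph{open sets}, to a \emph{pointwise} statement valid on a residual set of individual points $x$ with a \emph{uniform} gap bound. The subtlety is that piecewise syndeticity of $N_g(x,U)$ requires the gap bound $L$ to be independent of the long intervals witnessing thickness, whereas the naive Baire argument might only yield, for each $x$, hitting times that are infinite but with uncontrolled gaps. To handle this I would extract the uniform constant from the syndeticity in Theorem \ref{stronger-thm}: syndeticity provides a fixed bound $L$ such that $\{n: U\cap g(n)^{-1}V\neq\emptyset\}$ has gaps at most $L$, and this $L$ depends only on $U$ and $V$, not on any point. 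Combining this fixed-gap syndetic skeleton with the thick intervals supplied by the thickly-syndetic product statement yields precisely the piecewise-syndetic structure, and arranging these conditions as a countable family of dense open sets completes the category argument. The delicate point is ensuring the two constants (the syndetic gap and the thick windows) are compatible on a common residual set, which the filter property of thickly-syndetic sets is designed to guarantee.
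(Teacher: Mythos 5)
There is a genuine gap at the density step, and it is precisely at the obstacle you yourself flagged. Your plan is to get density of the sets $W_{m,L}$ (points admitting a length-$d$ block of returns to $U_m$ with gaps $\le L$) from the $k=1$ case of Theorem \ref{stronger-thm}. But the $k=1$ statement only says that for each $n$ in a syndetic set $F=\{n: V\cap g(n)^{-1}U_m\neq\emptyset\}$ \emph{some} point of $V$ lands in $U_m$ at time $n$ --- the witnessing point varies with $n$. Density of $W_{m,L}$ requires a \emph{single} point $x\in V$ with $g(m_1)x,\ldots,g(m_d)x\in U_m$ for a block $m_1<\cdots<m_d$ of bounded gaps, i.e.\ the nonemptiness of $V\cap\bigcap_{j=1}^d g(m_j)^{-1}U_m$. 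This is a genuinely multiple ($k=d$) transitivity statement, and neither the syndetic gap bound nor the filter property of thickly-syndetic sets (which concerns intersections of subsets of $\Z$, not common witnessing points) supplies it. So your proposed fix for the uniformity issue does not close the gap.

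The paper's route supplies exactly the missing ingredient. Writing $F=\{\ldots<n_0<n_1<\ldots\}$ with gaps $\le L(V)$, one applies Theorem \ref{main-thm} to the family of \emph{shifted} $\Gamma$-polynomials $g(n_j)^{-1}g(n+n_j)$, $j=1,\ldots,d$, to find $y_d\in U$ and $l_d$ with $g(l_d+n_j)y_d\in V$ for all $j$ simultaneously; the times $l_d+n_j$ form the desired block. This forces two further steps you would also need: (i) verifying the non-degeneracy hypothesis, namely that the shifted polynomials are pairwise distinct elements of $\PG_0^*$, which the paper does via the auxiliary polynomial $f(n)=g(n)^{-1}g(n+1)g(1)^{-1}$ and the eventual strict monotonicity of its leading integral polynomial; and (ii) a separate case when $g(n)=g(1)^n$, where all the shifts coincide and the multiple argument is unavailable, but minimality of $(X,g(1))$ gives syndeticity of $N_g(x,U)$ for \emph{every} $x$ directly. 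Your Baire-category packaging (countable base, open sets $A_d(U_i)$, intersection over $d$ and $i$) matches the paper's, but without the simultaneous-hitting input the dense-open sets are not shown to be dense.
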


\begin{rem} It is easy to see that to show the above theorems, we may assume that the coefficients of the polynomials involved
are integers.
\end{rem}

\subsection{Organization of the paper}

We organize the paper as follows. In Section \ref{section-preliminary} we introduce some basic notions
and facts we need in the paper. In Section \ref{section-nil}, we recall the PET-induction
for nilpotent group actions. In Section \ref{section-example},
we show some examples and outline the proof of Theorem \ref{main-1}, which  provides the main
ideas how to prove Theorem \ref{stronger-thm}. In the final section, we give the complete
proof of Theorem \ref{stronger-thm} and Theorem \ref{ye00}.

\medskip
\noindent{\bf Acknowledgments:}
We would like to thank the referee for very useful suggestion which  makes the paper more readable.

\section{Preliminary}\label{section-preliminary}

\subsection{Topological transformation groups}\
\medskip

A {\em topological dynamical system} (t.d.s. for short) is a triple
$\X=(X, \Gamma, \Pi)$, where $X$ is a compact metric space, $\Gamma$ is a
Hausdorff topological group with the unit $e_\Gamma$ and $\Pi: \Gamma\times X\rightarrow X$ is a
continuous map such that $\Pi(e_\Gamma,x)=x$ and
$\Pi(s,\Pi(t,x))=\Pi(st,x)$. We shall fix $\Gamma$ and suppress the
action symbol. In many references, $(X,\Gamma)$ is also called a {\em
topological transformation group} or a {\em flow}.

\medskip

Let $(X,\Gamma)$ be a t.d.s. and $x\in X$, then $\O(x,\Gamma)$ denotes the
{\em orbit} of $x$, which is also denoted by $\Gamma x$. A subset
$A\subseteq X$ is called {\em invariant} if $t a\subseteq A$ for all
$a\in A$ and $t\in \Gamma$. When $Y\subseteq X$ is a closed and
$\Gamma$-invariant subset of the system $(X, \Gamma)$ we say that the system
$(Y, \Gamma)$ is a {\em subsystem} of $(X, \Gamma)$. If $(X, \Gamma)$ and $(Y,
\Gamma)$ are two dynamical systems their {\em product system} is the
system $(X \times Y, \Gamma)$, where $t(x, y) = (tx, ty)$.

\medskip

A system $(X,\Gamma)$ is called {\em minimal} if $X$ contains no proper
non-empty closed invariant subsets. $(X,\Gamma)$ is called {\em transitive} if
every non-empty invariant open subset of $X$ is dense. An example of a
transitive system is the {\em point-transitive} system, which is a
system with a dense orbit. It is easy to verify that a system is
minimal iff every orbit is dense. A point $x\in X$ is called a {\em minimal} point if $(\overline{\O(x,\Gamma)}, \Gamma)$ is a minimal subsystem.
A system $(X,\Gamma)$ is {\em weakly mixing} if the product system $(X
\times X,\Gamma)$ is transitive.

\subsection{Some important subsets of integers}\
\medskip

A subset $S$ of $\Z$  is {\it syndetic} if it has bounded gaps,
i.e. there is $N\in \N$ such that $\{i,i+1,\cdots,i+N\} \cap S \neq
\emptyset$ for every $i \in {\Z}$. $S$ is {\it thick} if it
contains arbitrarily long runs of integers, i.e. there is a
subsequence $\{n_i\}_{i=1}^\infty$ of $\Z$ with $|n_{i+1}|>|n_i|$ for any $i\in\N$ such that
$S\supset \bigcup_{i=1}^\infty \{n_i, n_i+1, \ldots, n_i+i\}$.
Some dynamical properties can be interrupted by using the notions of
syndetic or thick subsets. For example, a classic result of
Gottschalk and Hedlund stated that $x$ is a minimal point if and only if
$$N(x,U)=\{n\in\Z: T^nx\in U\}$$ is syndetic for any neighborhood $U$ of $x$,
and by Furstenberg \cite{F67} a topological system
$(X,T)$ is weakly mixing if and only if
$$N(U,V)=\{n\in\Z: U\cap T^{-n}V\neq \emptyset \}$$ is thick for any
non-empty open subsets $U,V$ of $X$.

\medskip

A subset $S$ is called {\em thickly-syndetic} if for every $N\in\N$ the positions where length $N$ runs begin form a syndetic set.
A subset $S$ of $\Z$ is {\it piecewise syndetic} if it is an
intersection of a syndetic set with a thick set.

Note that the set of all thickly-syndetic sets is a filter, i.e. the intersection of two thickly-syndetic
sets is still a thickly-syndetic set (see \cite{F} for more details).

\medskip

The following lemma will be used in the sequel.

\begin{lem}\cite[Theorem 4.7.]{HY02}\label{lem-hy}
For a minimal and weakly mixing system
$(X,T)$,
$$N(U,V)=\{n\in\Z: U\cap T^{-n}V\neq \emptyset \}$$ is thickly-syndetic for any
nonempty open subsets $U,V$ of $X$.
\end{lem}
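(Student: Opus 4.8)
The plan is to unravel the definition of thickly-syndetic and reduce everything to a statement about the starting positions of long runs inside $N(U,V)$. Recall that $S\subseteq\Z$ is thickly-syndetic precisely when, for every $k\in\N$, the set $R_k:=\{m:\{m,m+1,\ldots,m+k\}\subseteq S\}$ is syndetic. Applying this to $S=N(U,V)$ and using $N(U,V)-i=\{m:U\cap T^{-(m+i)}V\neq\emptyset\}=N(U,T^{-i}V)$, I would write $R_k=\bigcap_{i=0}^{k}\big(N(U,V)-i\big)=\bigcap_{i=0}^{k}N(U,T^{-i}V)$. The key observation is that this intersection is itself a single hitting-time set for the $(k+1)$-fold product system $(X^{k+1},\tau)$, where $\tau=T\times\cdots\times T$: setting $A=U\times\cdots\times U$ and $B=V\times T^{-1}V\times\cdots\times T^{-k}V$, one checks that $R_k=\{m:A\cap\tau^{-m}B\neq\emptyset\}$, since $\tau^{m}(x_0,\ldots,x_k)\in B$ means exactly $T^{m+i}x_i\in V$ for each $i$. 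Thus the lemma reduces to proving that this hitting-time set is syndetic for the nonempty open boxes $A,B\subseteq X^{k+1}$.

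Two classical inputs are available. From minimality, for any $x\in U$ the return set $N(x,V)=\{n:T^{n}x\in V\}$ is syndetic (Gottschalk--Hedlund, equivalently a finite-subcover argument applied to $X=\bigcup_{n}T^{-n}V$), and since $N(U,V)\supseteq N(x,V)$, each translate $N(U,V)-i$ is syndetic with one common gap bound $c$. From weak mixing, Furstenberg's theorem \cite{F67} that all powers of a weakly mixing system are again weakly mixing shows that $(X^{k+1},\tau)$ is weakly mixing, whence the hitting-time set above is \emph{thick}; in particular $N(U,V)$ contains arbitrarily long runs. Equivalently, by the syndetic/thick and thickly-syndetic/piecewise-syndetic dualities, the whole lemma is the assertion that the miss set $N(U,V)^{c}=\{n:T^{n}U\cap V=\emptyset\}$ is \emph{not} piecewise syndetic, which I regard as the cleanest target to aim at.

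The main obstacle is exactly the localization step: upgrading ``long runs exist somewhere'' (thickness, coming from weak mixing) to ``long runs begin within every bounded window'' (syndeticity of $R_k$). This cannot be read off from the product system directly, because although $(X^{k+1},\tau)$ is weakly mixing it is \emph{not} minimal: its residual set of transitive points are not almost periodic, and its genuine minimal subsets need not meet the box $A=U\times\cdots\times U$, so the required syndeticity must be extracted from the minimality of $(X,T)$ itself rather than from recurrence in the product. The route I would attempt is to feed the weak-mixing input back into the minimal structure. Using transitivity of the product, select a nonempty open box $O_0\times\cdots\times O_k$ and an integer $q$ with $\tau^{q}(O_0\times\cdots\times O_k)\subseteq B$, reducing the target to syndeticity of $\bigcap_{i=0}^{k}N(U,O_i)$; then invoke minimality and compactness to cover $X$ by finitely many translates of the relevant open sets, producing a \emph{uniform} return gap valid in every window, and assemble the resulting finitely many syndetic returns using the fact recorded in Section \ref{section-preliminary} that the thickly-syndetic sets form a filter. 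I expect the genuinely delicate point to be keeping the gap bound uniform across all windows while simultaneously realizing all $k+1$ coordinate conditions at a single time $m$; reconciling the non-localized runs supplied by weak mixing with the localized single returns supplied by minimality is the real dynamical content of the statement, and is where the proof must do its work.
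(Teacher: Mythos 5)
The paper itself does not prove this lemma --- it is imported verbatim from \cite{HY02} --- so the only question is whether your argument stands on its own, and it does not. Your reduction is correct: $N(U,V)$ is thickly syndetic iff for every $k$ the set $R_k=\bigcap_{i=0}^{k}N(U,T^{-i}V)$ is syndetic, and $R_k$ is the hitting-time set of the boxes $A=U\times\cdots\times U$ and $B=V\times T^{-1}V\times\cdots\times T^{-k}V$ under $\tau=T\times\cdots\times T$. You also diagnose the obstruction accurately: weak mixing of the (non-minimal) product gives only thickness of $R_k$, minimality gives only syndeticity of each single $N(U,T^{-i}V)$, and neither formally yields syndeticity of the intersection. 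But at exactly that point the proof stops. The ``route you would attempt'' does not close the gap: replacing $B$ by a box $O_0\times\cdots\times O_k$ with $\tau^{q}(O_0\times\cdots\times O_k)\subseteq B$ leaves you with $\bigcap_{i=0}^{k}N(U,O_i)$, a set of precisely the same form as $R_k$, so no progress is made; and the plan to ``assemble the resulting finitely many syndetic returns using the fact that the thickly-syndetic sets form a filter'' is circular, since the filter property helps only if you already know the individual sets $N(U,O_i)$ are thickly syndetic --- which is the statement being proved.

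The missing idea is Furstenberg's intersection property for weakly mixing systems, which makes the family $\{N(U,V):U,V\ \text{nonempty open}\}$ a filter base. Given nonempty open $U_1,V_1,U_2,V_2$, transitivity of $(X\times X, T\times T)$ produces $n$ with $U':=U_1\cap T^{-n}U_2\neq\emptyset$ and $V':=V_1\cap T^{-n}V_2\neq\emptyset$; then $N(U',V')\subseteq N(U_1,V_1)\cap N(U_2,V_2)$, because $x\in U'$ and $T^{m}x\in V'$ give $x\in U_1$, $T^{m}x\in V_1$, while $T^{n}x\in U_2$ and $T^{m}(T^{n}x)\in T^{n}V'\subseteq V_2$. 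Iterating this, $R_k=\bigcap_{i=0}^{k}N(U,T^{-i}V)\supseteq N(U^{*},V^{*})$ for some nonempty open $U^{*},V^{*}$. Finally, minimality makes $N(U^{*},V^{*})$ syndetic, since it contains $N(x,V^{*})$ for any $x\in U^{*}$. Hence every $R_k$ is syndetic and $N(U,V)$ is thickly syndetic. This intersection lemma is exactly the localization step you flagged as ``where the proof must do its work''; without it, your write-up is a correct reformulation of the statement together with an accurate description of its difficulty, not a proof.
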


Since the collection of all thickly syndetic sets is a filter, one consequence of Lemma~ \ref{lem-hy} is:

\begin{cor}\label{cor-hy}
Let $d\in \N$ and $(X,T_1),\ldots, (X,T_d)$ be weakly mixing and minimal systems. Then
$(X_1\times \ldots\times X_d, T_1\times \ldots \times T_d)$ is weakly mixing.
\end{cor}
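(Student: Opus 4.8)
The plan is to use the characterization of weak mixing via thickness of the hitting-time sets $N(\cdot,\cdot)$, recalled above, together with the crucial fact that Lemma~\ref{lem-hy} upgrades that thickness to \emph{thick syndeticity} when the factors are minimal and weakly mixing. Write $(Y,S)=(X_1\times\cdots\times X_d,\,T_1\times\cdots\times T_d)$. By Furstenberg's criterion, to prove that $(Y,S)$ is weakly mixing it suffices to show that
\[
N(\mathcal U,\mathcal V)=\{n\in\Z:\mathcal U\cap S^{-n}\mathcal V\neq\emptyset\}
\]
is thick for every pair of nonempty open subsets $\mathcal U,\mathcal V\subseteq Y$.

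First I would reduce to basic open boxes. Since every nonempty open subset of $Y$ contains a set of the form $U_1\times\cdots\times U_d$ with each $U_i$ nonempty and open in $X_i$, and since $N(\mathcal U',\mathcal V')\subseteq N(\mathcal U,\mathcal V)$ whenever $\mathcal U'\subseteq\mathcal U$ and $\mathcal V'\subseteq\mathcal V$, it is enough to prove thickness when $\mathcal U=U_1\times\cdots\times U_d$ and $\mathcal V=V_1\times\cdots\times V_d$ are boxes. For such boxes the product action gives
\[
\mathcal U\cap S^{-n}\mathcal V=\prod_{i=1}^d\bigl(U_i\cap T_i^{-n}V_i\bigr),
\]
which is nonempty exactly when every factor is nonempty, so that
\[
N(\mathcal U,\mathcal V)=\bigcap_{i=1}^d N_i(U_i,V_i),\qquad N_i(U_i,V_i)=\{n\in\Z:U_i\cap T_i^{-n}V_i\neq\emptyset\}.
\]

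The decisive step, and the place where minimality is genuinely used, is the conclusion. Plain weak mixing would only tell us (via Furstenberg) that each $N_i(U_i,V_i)$ is thick, and thick sets are \emph{not} closed under finite intersection, so this alone would not control $N(\mathcal U,\mathcal V)$. Instead, since each $(X_i,T_i)$ is both minimal and weakly mixing, Lemma~\ref{lem-hy} guarantees that each $N_i(U_i,V_i)$ is thickly-syndetic. Because the family of thickly-syndetic sets is a filter, the finite intersection $\bigcap_{i=1}^d N_i(U_i,V_i)=N(\mathcal U,\mathcal V)$ is again thickly-syndetic, and in particular thick. This yields the thickness criterion for all box pairs, hence for all nonempty open pairs, and therefore $(Y,S)$ is weakly mixing.

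I do not anticipate any real obstacle here: the argument is essentially a one-line filter computation once the coordinatewise identity for $N(\mathcal U,\mathcal V)$ is written down. The only point that requires attention is conceptual rather than technical, namely recognizing that the whole corollary hinges on replacing the bare thickness of Furstenberg's criterion by the thick syndeticity of Lemma~\ref{lem-hy}; it is precisely this strengthening that makes the intersection over the $d$ factors behave well.
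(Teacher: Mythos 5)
Your proof is correct and follows exactly the paper's intended argument: the paper derives this corollary in one line from Lemma~\ref{lem-hy} together with the fact that thickly-syndetic sets form a filter, which is precisely the reduction to boxes, coordinatewise intersection, and Furstenberg thickness criterion that you spell out.
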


\subsection{Some notions and useful lemmas}\
\medskip

\subsubsection{Notations}\label{notation}\
\medskip

Let $(X,\Gamma)$ be a t.d.s., $\Gamma$ be a group, and $d,k\in \N$. Let $T_1,\ldots,T_d\in \Gamma$,
$p_{i,j}(n)\in \mathcal{P}_0, 1\le i\le k, 1\le j\le d$, and let
\begin{equation*}
  g_i(n)=T_1^{p_{i,1}(n)}\cdots T_d^{p_{i,d}(n)}, \quad i=1,2,\ldots, k.
\end{equation*}

We will fix the above notation in the rest of this section.

\subsubsection{$\{g_1,\ldots,g_k\}_\D$-transitivity}

\begin{de}
We say $(X,\Gamma)$ is {\em $\{g_1,\ldots,g_k\}_\D$-transitive} if for any given non-empty open subsets $U, V_1,\ldots, V_k$ of $X$,
\begin{equation*}
  \{n\in \Z: U\cap (g_1(n)^{-1}V_1\cap \ldots \cap g_k(n)^{-1}V_k)\not=\emptyset\}
\end{equation*}
is infinite.
\end{de}

The following lemma is a generalization of an observation in \cite{Moo}.

\begin{lem}\label{diagonal1}
Let $(X,\Gamma)$ and $g_1,\ldots, g_k$ be defined as in subsection \ref{notation}.
Then there is a dense $G_\delta$ set $X_0$ of $X$ such that for all $x\in X_0$
\begin{equation*}
  \{(g_1(n)x, g_2(n)x,\ldots,g_k(n)x)\in X^k:n\in \mathbb{Z}\}
\end{equation*}
is dense in $X^k$ if and only if it is $\{g_1,\ldots,g_k\}_\D$-transitive.
\end{lem}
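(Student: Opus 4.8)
The plan is to run a standard Baire-category argument, realizing $X_0$ as the set of points whose diagonal orbit is dense. For each $n\in\Z$ let $\phi_n\colon X\to X^k$ be the continuous map $\phi_n(x)=(g_1(n)x,\ldots,g_k(n)x)$; here each $x\mapsto g_i(n)x$ is a homeomorphism of $X$, so $\phi_n^{-1}(V_1\times\cdots\times V_k)=\bigcap_{i=1}^k g_i(n)^{-1}V_i$. Since $X$ is compact metric, $X^k$ is second countable; fix a countable base $\{W_m\}_{m\in\N}$ of $X^k$ consisting of boxes $W_m=V_{m,1}\times\cdots\times V_{m,k}$ with each $V_{m,i}$ open in $X$. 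Put
\[
O_m=\bigcup_{n\in\Z}\phi_n^{-1}(W_m),\qquad X_0=\bigcap_{m\in\N}O_m .
\]
Each $O_m$ is open, so $X_0$ is a $G_\delta$ set. Moreover the diagonal orbit $\{\phi_n(x):n\in\Z\}$ is dense in $X^k$ exactly when it meets every $W_m$, i.e.\ exactly when $x\in X_0$; thus $X_0$ is precisely the set of points with dense diagonal orbit, and proving the lemma amounts to showing that $X_0$ is dense if and only if $(X,\Gamma)$ is $\{g_1,\ldots,g_k\}_\D$-transitive.

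Next I would translate density of $X_0$ into an orbit-intersection condition. As $X$ is compact metric it is a Baire space, so the $G_\delta$ set $X_0$ is dense if and only if every (open) set $O_m$ is dense. Unwinding, $O_m$ meets a nonempty open $U$ precisely when there is some $n$ with $U\cap\bigcap_{i=1}^k g_i(n)^{-1}V_{m,i}\neq\emptyset$, that is, when $N(U,V_{m,1},\ldots,V_{m,k})\neq\emptyset$, where
\[
N(U,V_1,\ldots,V_k)=\{n\in\Z:\ U\cap(g_1(n)^{-1}V_1\cap\cdots\cap g_k(n)^{-1}V_k)\neq\emptyset\}.
\]
Using that $\{W_m\}$ is a base and that $N$ is monotone in $U$ and in each $V_i$, the statement ``all $O_m$ are dense'' is equivalent to the assertion $(B')$ that $N(U,V_1,\ldots,V_k)\neq\emptyset$ for all nonempty open $U,V_1,\ldots,V_k$. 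Hence $X_0$ is dense if and only if $(B')$ holds.

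It remains to bridge $(B')$ (nonemptiness of every $N$) with $\{g_1,\ldots,g_k\}_\D$-transitivity (infiniteness of every $N$). One implication is trivial, since infinite sets are nonempty. For the converse I would use the fact that the ambient space has no isolated points: fixing nonempty open $U,V_1,\ldots,V_k$, density of $X_0$ lets me choose $x\in X_0\cap U$, and since $X^k$ is perfect a dense sequence must enter the box $V_1\times\cdots\times V_k$ for infinitely many indices $n$; each such $n$ lies in $N(U,V_1,\ldots,V_k)$, which is therefore infinite. This last step is the main point to watch: the Baire argument only produces nonemptiness, and upgrading it to the infiniteness required by the definition of $\{g_1,\ldots,g_k\}_\D$-transitivity relies on $X$ being perfect (which holds in our setting, since a nontrivial weakly mixing minimal system has no isolated points); this is the generalization of the observation from \cite{Moo}. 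Note that the direction actually used later, namely that $\{g_1,\ldots,g_k\}_\D$-transitivity implies the existence of the dense $G_\delta$ set $X_0$, follows immediately from the chain above without invoking perfectness.
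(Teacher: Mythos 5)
Your proof is correct and takes essentially the same route as the paper: the same Baire-category construction of $X_0$ as a countable intersection of the open sets $\bigcup_{n}\bigcap_i g_i(n)^{-1}V_i$ over a countable base, with density of each such open set translated into nonemptiness of the hitting sets $N(U,V_1,\ldots,V_k)$. You are in fact more explicit than the paper's one-line treatment of the ``obvious'' direction, correctly isolating that the upgrade from nonemptiness to the infiniteness demanded by $\{g_1,\ldots,g_k\}_\Delta$-transitivity uses that $X$ has no isolated points, which holds in the setting where the lemma is applied.
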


\begin{proof}
One direction is obvious. And now assume that for any given non-empty open sets $U, V_1,\ldots, V_k$ of $X$, $\{n\in \Z:U\cap (g_1(n)^{-1}V_1\cap \ldots \cap g_k(n)^{-1}V_k)\not=\emptyset\}$
is infinite.

%For $m\in \mathbb{N}$, let $$A_m=\{x\in X: \{(g_1(n)x,g_2(n)x,\cdots, g_k(n)x)\}_{n\in\mathbb{Z}}\ \text{is}\ 1/m\text{-dense in }X^k\}.$$
%It is clear that $A_m$ is an open subset of $X$. Now we show that $A_m$ is dense in $X$. Assume that
%$X^k=\cup_{i=1}^jB_i$ with $B_i=V_1^i\times \ldots \times V_k^i$ open
%and non-empty and ${\rm diam} (B_i)<1/m$. Let $U$ be an open non-empty subset of $X$. Then inductively there are $n_1,\ldots, n_j\in\N$ such that
%$$C=:U\cap \bigcap_{i=1}^j\big( g_1(n_i)^{-1}V_1^i\cap\ldots \cap g_k(n_i)^{-1}V_k^i \big)\neq \emptyset.$$
%Then for any $x\in C$ we have $x\in U\cap A_m$, i.e. $A_m$ is dense.
%Let $X_0=\bigcap_{m=1}^\infty A_m$. Then $X_0$ is a dense $G_\delta$-subset of $X$. Moreover
%for each $x\in X_0$, $\{(g_1(n)x,g_2(n)x, \ldots,g_d(n)x):n\in\mathbb{Z}\}$ is dense in $X^k$.
%Here is another proof:
Let $\F$ be a countable base of $X$, and let
\begin{equation*}
  X_0=\bigcap_{V_1,\ldots,V_k\in \F}\bigcup_{n\in \Z} g_1(n)^{-1}V_1\cap \ldots \cap g_k(n)^{-1}V_k.
\end{equation*}
Then it is easy to see that the dense $G_\delta$ subset $X_0$ is what we need.
\end{proof}

Hence by Lemma \ref{diagonal1}, Theorem \ref{main-thm} can be restated as: Assume all the conditions in Theorem \ref{main-thm}, then $(X,\Gamma)$ is $\{g_1,\ldots,g_k\}_\D$-transitive.
%\medskip

%\noindent {\bf Theorem} \ {\em
%Let $(X,\Gamma)$ be a topological system, where $\Gamma$ is a nilpotent group  such that for each $T\in \Gamma$,
%$T\neq e_\Gamma$, is weakly mixing and minimal. For $d,k\in \N$ let $T_1,\ldots,T_d\in \Gamma$,
%$\{p_{i,j}(n)\}_{1\le i\le k, 1\le j\le d}\in \mathcal{P}_0$
%such that the expression
%\begin{equation*}
%  g_i(n)=T_1^{p_{i,1}(n)}\cdots T_d^{p_{i,d}(n)}
%\end{equation*}
%depends nontrivially on $n$ for $i=1,2,\ldots, k$, and for all $i\neq j\in \{1,2,\ldots,k\}$ the expressions $g_i(n)g_j(n)^{-1}$
%depend nontrivially on $n$.
%}

\subsubsection{$\{g_1,\ldots,g_k\}$-transitivity}

\begin{de}
$(X,\Gamma)$ is {\em $\{g_1,\ldots, g_k\}$-transitive} if for all non-empty open sets $U_1,\ldots, U_d$ and $V_1,\ldots,V_d$ of $X$,
\begin{equation*}
  \{n\in \Z: U_1\times \ldots \times U_k \cap g_1^{-1}(n)\times \ldots\times g_k^{-1}(n)(V_1\times \ldots \times V_d)\neq \emptyset\}
\end{equation*}
is infinite.
\end{de}

The following lemma is a generalization of Lemma 3 of \cite{KO}.

\begin{lem}\label{lem-KO} \label{freedom}
Let $(X,\Gamma)$ and $g_1,\ldots, g_k$ be defined as in subsection \ref{notation} and $T\in \Gamma$.
If $({X}, \Gamma)$ is $\{g_1,\ldots, g_k\}$-transitive, then for any non-empty open sets $V_1,\ldots ,V_k$ of $X$ and any
subsequence $\{r(n)\}_{n=0}^\infty$ of natural numbers, there is a sequence of integers $\{k_n\}_{n=0}^\infty$ such that
$|k_0|>r(0)$, $|k_n|>|k_{n-1}|+r(|k_{n-1}|)$  for all $n\ge 1$, and for each $i\in \{1,2,\ldots,k\}$, there is a descending
sequence $\{V_i^{(n)}\}_{n=0}^\infty$ of non-empty open subsets of $V_i$ such that for each $n\ge 0$ one has that
\begin{equation*}
  g_i(k_j)T^{-j}V_i^{(n)}\subseteq V_i, \quad \text{for all}\quad 0\le j\le n.
\end{equation*}
\end{lem}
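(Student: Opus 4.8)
The plan is to build the integers $k_n$ together with the nested open sets $V_i^{(n)}$ simultaneously by induction on $n$, invoking $\{g_1,\ldots,g_k\}$-transitivity at every stage to produce a single integer $k_n$ that serves all $k$ coordinates at once. The key structural observation, which I would isolate first, is that once the sets $V_i^{(n)}$ are forced to be descending, only the newest relation (the one with $j=n$) has to be created at step $n$: all the earlier relations are inherited for free by monotonicity, since $V_i^{(n)}\subseteq V_i^{(n-1)}$ and both $g_i(k_j)$ and $T^{-j}$ act as homeomorphisms, whence $g_i(k_j)T^{-j}V_i^{(n)}\subseteq g_i(k_j)T^{-j}V_i^{(n-1)}\subseteq V_i$.

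For the base case $n=0$ I would apply $\{g_1,\ldots,g_k\}$-transitivity with source sets $U_i=V_i$ and target sets $V_i$, obtaining infinitely many $m$ for which $V_i\cap g_i(m)^{-1}V_i\neq\emptyset$ holds for every $i$ at once. Fixing one such $m$ with $|m|>r(0)$ and choosing, for each $i$, a point $x_i\in V_i$ with $g_i(m)x_i\in V_i$, continuity of the action furnishes an open neighborhood $V_i^{(0)}\subseteq V_i$ of $x_i$ with $g_i(m)V_i^{(0)}\subseteq V_i$. Setting $k_0=m$ and using $T^{0}=\id$, this is exactly the required relation $g_i(k_0)T^{0}V_i^{(0)}\subseteq V_i$.

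For the inductive step, assume $k_0,\ldots,k_{n-1}$ and descending open sets $V_i^{(0)}\supseteq\cdots\supseteq V_i^{(n-1)}$ have been chosen so that $g_i(k_j)T^{-j}V_i^{(m)}\subseteq V_i$ for all $0\le j\le m\le n-1$. By the observation above I only need to realize the relation at $j=n$. I would apply transitivity to the source sets $U_i=T^{-n}V_i^{(n-1)}$, which are open and nonempty because $T^{-n}$ is a homeomorphism, keeping the same targets $V_i$; this yields infinitely many $m$ with $U_i\cap g_i(m)^{-1}V_i\neq\emptyset$ for every $i$ simultaneously. Selecting one such $m$ with $|m|>|k_{n-1}|+r(|k_{n-1}|)$ and, for each $i$, a point $y_i\in U_i$ with $g_i(m)y_i\in V_i$, continuity gives an open set $O_i\subseteq U_i$ with $g_i(m)O_i\subseteq V_i$. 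Putting $k_n=m$ and $V_i^{(n)}=T^{n}O_i$, one checks $V_i^{(n)}\subseteq T^{n}U_i=V_i^{(n-1)}$ and $g_i(k_n)T^{-n}V_i^{(n)}=g_i(m)O_i\subseteq V_i$, which closes the induction.

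The construction never stalls, because at each step the transitivity set is infinite and hence unbounded in absolute value, so the growth constraints $|k_0|>r(0)$ and $|k_n|>|k_{n-1}|+r(|k_{n-1}|)$ can always be satisfied by discarding finitely many admissible $m$. The one point demanding care, and the reason the definition of transitivity is phrased as a product over all $k$ coordinates rather than coordinatewise, is the requirement that a single integer $k_n$ work for all indices $i$ at once; this simultaneity is precisely what $\{g_1,\ldots,g_k\}$-transitivity delivers and is the only feature an index-by-index argument could not guarantee. Beyond this bookkeeping there is no genuine obstacle, which is why the argument follows the pattern of Lemma 3 of \cite{KO}.
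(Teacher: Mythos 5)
Your proposal is correct and follows essentially the same inductive scheme as the paper: at each stage apply $\{g_1,\ldots,g_k\}$-transitivity to the source sets $T^{-n}V_i^{(n-1)}$ to obtain a single $k_n$ of large enough absolute value working for all coordinates simultaneously, and shrink the open sets so that only the new relation at $j=n$ needs to be created. The only cosmetic difference is that the paper defines $V_i^{(n)}=V_i^{(n-1)}\cap\bigl(g_i(k_n)T^{-n}\bigr)^{-1}V_i$ directly rather than via your $O_i$ and $T^nO_i$, which amounts to the same set.
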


%\begin{rem}
%Note that we have no restriction on the topological group $\Gamma$.
%\end{rem}

\begin{proof}
Let $V_1,\ldots ,V_k$ be non-empty open subsets of $X$. Since $(X,\Gamma)$ is $\{g_1,\ldots, g_k\}$-transitive, there is some $k_0$ with $|k_0|>r(0)$ such that
\begin{equation*}
  V_1\times \ldots \times V_k \cap g_1^{-1}(k_0)\times \ldots\times g_k^{-1}(k_0)(V_1\times \ldots \times V_k)\neq \emptyset.
\end{equation*}
That is, $g_i^{-1}(k_0)V_i\cap V_i\neq \emptyset$ for all $i=1,\ldots,k$. Put $V_i^{(0)}=g_i^{-1}(k_0)V_i\cap V_i$ for all $i=1,\ldots,k$ to complete the base step.

Now assume that for $n\ge 1$ we have found numbers $k_0,k_1,\ldots, k_{n-1}$ and for each $i=1,\ldots,k$, we have non-empty open subsets $V_i\supseteq V_i^{(0)}\supseteq V_i^{(1)}\ldots \supseteq V_i^{(n-1)}$ such that $|k_0|>r(0)$, and for each $m=1,\ldots, n-1$
one has $|k_m|>|k_{m-1}|+r(|k_{m-1}|)$ and
\begin{equation}\label{a2}
  g_i(k_j)T^{-j}V_i^{(m)}\subseteq V_i, \quad \text{for all}\quad 0\le j\le m.
\end{equation}

For $i=1,\ldots, k$, let $U_i=T^{-n}(V_i^{(n-1)})$. Since $({X},\Gamma)$ is $\{g_1,\ldots, g_k\}$-transitive, there is some $k_n\in \mathbb{Z}$ such that $|k_n|>|k_{n-1}|+r(|k_{n-1}|)$ and
\begin{equation*}
  U_1\times \ldots \times U_k \cap g_1^{-1}(k_n)\times \ldots\times g_k^{-1}(k_n)(V_1\times \ldots \times V_k)\neq \emptyset.
\end{equation*}
That is, $g_i^{-1}(k_n)V_i\cap U_i\neq \emptyset$ or $V_i\cap g_i(k_n)U_i\neq \emptyset$ for all $i=1,\ldots,k$.

Then for $i=1,\ldots, k$,
\begin{equation*}
  g_i(k_n)U_i\cap V_i=g_i(k_n)T^{-n}V_i^{(n-1)}\cap V_i\not=\emptyset.
\end{equation*}
Let
$$V_i^{(n)}=V_i^{(n-1)}\cap \big(g_i(k_n)T^{-n}\big)^{-1}V_i.$$
Then $V_i^{(n)}\subseteq V_i^{(n-1)}$ is a non-empty open set and clearly
$$g_i(k_n)T^{-n} V_i^{(n)}\subseteq V_i.$$
Since $V_i^{(n)}\subseteq V_i^{(n-1)}$, (\ref{a2}) still holds for $V_i^{(n)}$. Hence we finish our induction. The proof of the lemma is completed.
\end{proof}

\begin{rem}
By the proof of Lemma \ref{freedom}, if for all non-empty open sets $U_1,\ldots, U_d$ and $V_1,\ldots,V_d$ of $X$, the set $
  \{n\in \Z: U_1\times \ldots \times U_k \cap g_1^{-1}(n)\times \ldots\times g_k^{-1}(n)(V_1\times \ldots \times V_d)\neq \emptyset\}
$ contains infinitely many positive integers, then we may require that $\{k_n\}_{n=0}^\infty\subseteq \N$ in Lemma \ref{freedom}.
\end{rem}

\subsubsection{$\{g_1,\ldots,g_k\}_\D$-syndetic transitivity and $\{g_1,\ldots,g_k\}$-thickly-syndetic transitivity}\
\medskip

We will need the following definitions.

\begin{de}
We say $(X,\Gamma)$ is
\begin{enumerate}
\item {\em $\{g_1,\ldots,g_k\}_\D$-syndetic transitive}, if for any given non-empty open sets $U, V_1,\ldots, V_k$ of $X$,
\begin{equation*}%\label{s1}
\{n\in \mathbb{Z}:  U\cap (g_1(n)^{-1}V_1\cap \ldots \cap g_k(n)^{-1}V_k)\not=\emptyset\}
\end{equation*}
is a syndetic set.

\item {\em $\{g_1,\ldots,g_k\}$-thickly-syndetic transitive} if for all non-empty open sets $U_1,\ldots, U_k$ and $V_1,\ldots,V_k$ of $X$,
\begin{equation*}
  \{n\in \mathbb{Z}: U_1\times \ldots \times U_k \cap g_1(n)^{-1}\times \ldots\times g_k(n)^{-1}(V_1\times \ldots \times V_k)\neq \emptyset\}
\end{equation*}
is a thickly-syndetic set.
\end{enumerate}
\end{de}

It is clear that $\{g_1,\ldots,g_k\}_\D$-syndetic transitivity implies $\{g_1,\ldots,g_k\}_\D$-transitivity, and $\{g_1,\ldots,g_k\}$-thickly-syndetic transitivity implies $\{g_1,\ldots,g_k\}$-transitivity.

\section{Nilpotent groups and PET-induction }\label{section-nil}

To prove Theorem \ref{stronger-thm}, we need some basic results on nilpotent groups and PET-induction. In this section, we cite the basic results
related to nilpotent groups from \cite{Leibman94}, which will be needed in the inductive part of the proof of Theorem \ref{stronger-thm}.

%Then we will show that for nilpotent group actions, there are plenty of minimal points in the product systems.

\medskip

In the sequel, let $\Gamma$ denote a finitely generated nilpotent group without torsion.
%(since every nilpotent group is a factor of a nilpotent group without torsion,
%we may assume this without loss of generality).

\subsection{Malcev basis}
\begin{thm}\label{thm-5.1}\cite{Leibman94}
Let $\Gamma$ be a finitely generated nilpotent group without torsion.
Then there exists a set of elements $\{S_1,\ldots, S_s \}$ of\ $\Gamma$ (the so called, "Malcev
basis") such that:
\begin{enumerate}
  \item  for any $1\le i<j\le s$, $[S_i, S_j]$ belongs to the subgroup of $\Gamma$ generated
by $S_1,\ldots , S_{i-1}$;
  \item every element $T$ of $\Gamma$ can be uniquely represented in the form
\begin{equation*}
  %T=\prod_{j=1}^s S_j^{r_j(T)}, \quad r_j(T)\in \Z,\quad j=1,\ldots, s;
  T= S_1^{r_1(T)}\cdots S_s^{r_s(T)}, \quad r_j(T)\in \Z,\quad j=1,\ldots, s;
\end{equation*}
the mapping $r:\Gamma\rightarrow \Z^s$, $r(T) = ( r_1 ( T ) , \ldots , r_s ( T ) )$, being polynomial
in the following sense: there exist polynomial mappings $R: \Z^{2s}\rightarrow \Z^s$,
$R': \Z^{s+1}\rightarrow \Z^s$ such that, for any $T,T'\in \Gamma$ and any $n\in \N$,
\begin{equation*}
  r(TT')=R(r(T),r(T')), \quad r(T^n)=R'(r(T),n).
\end{equation*}
\end{enumerate}
\end{thm}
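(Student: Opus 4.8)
The plan is to obtain the basis from a central series with infinite cyclic factors and then to establish the polynomiality of the coordinate maps by a collection process. Since $\Gamma$ is finitely generated, nilpotent and torsion free, I would start from the lower central series $\Gamma=\gamma_1\supseteq\gamma_2=[\Gamma,\Gamma]\supseteq\cdots\supseteq\gamma_{c+1}=\{e_\Gamma\}$, where $c$ is the nilpotency class. Its factors $\gamma_a/\gamma_{a+1}$ are finitely generated abelian but may carry torsion, so I would replace each $\gamma_a$ by its isolator $\sqrt{\gamma_a}=\{g\in\Gamma:g^m\in\gamma_a\text{ for some }m\ge1\}$, which in a nilpotent group is a characteristic subgroup and produces torsion free, hence free abelian, factors $\sqrt{\gamma_a}/\sqrt{\gamma_{a+1}}\cong\Z^{d_a}$. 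The commutator estimate survives the passage to isolators, $[\sqrt{\gamma_a},\sqrt{\gamma_b}]\subseteq\sqrt{[\gamma_a,\gamma_b]}\subseteq\sqrt{\gamma_{a+b}}$. Refining each free abelian factor into infinite cyclic pieces yields a normal series $\{e_\Gamma\}=H_0\subset H_1\subset\cdots\subset H_s=\Gamma$ with each $H_i/H_{i-1}\cong\Z$, and I would list the corresponding generators $S_1,\dots,S_s$ from the deepest term outward, so that $i<j$ forces $S_i$ to sit in a term $\sqrt{\gamma_a}$ at least as deep as the one containing $S_j$. Then $[S_i,S_j]\in\sqrt{\gamma_{a+1}}=\langle S_1,\dots,S_m\rangle$ with $m<i$, which is exactly condition (1).

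Unique representation then follows by downward induction along the series: because $S_1,\dots,S_{s-1}\in H_{s-1}$, the class of $T$ in $H_s/H_{s-1}\cong\Z$ is $S_s^{r_s(T)}$ for a unique integer $r_s(T)$; then $T S_s^{-r_s(T)}\in H_{s-1}=\langle S_1,\dots,S_{s-1}\rangle$, and I repeat. Torsion freeness is what makes each factor genuinely $\Z$, so every exponent is forced, giving both existence and uniqueness of the expression $T=S_1^{r_1(T)}\cdots S_s^{r_s(T)}$.

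The main obstacle is the polynomiality of the coordinate maps. To compute $r(TT')$ I would put $TT'=S_1^{a_1}\cdots S_s^{a_s}\,S_1^{b_1}\cdots S_s^{b_s}$ into standard form by a collection process, moving each $S_i^{b_i}$ leftward past the $S_j$ with $j>i$; by condition (1) every elementary transposition contributes a correction lying in $\langle S_1,\dots,S_{i-1}\rangle$, so the process strictly descends in the series and terminates after at most $c$ rounds. The real content is to check, by induction on $c$, that the exponents produced are \emph{polynomial} in $(a_\bullet,b_\bullet)$ with degrees bounded in terms of $c$; the same induction handles $r(T^n)$, where the binomial coefficients $\binom{n}{\ell}$ arising from iterated collection are precisely what make the dependence on $n$ polynomial rather than linear. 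I expect this bookkeeping to be the genuinely technical step. The cleanest way to make the polynomiality transparent is to pass to the Malcev completion: embed $\Gamma$ as a lattice in the simply connected nilpotent Lie group $G$ with Lie algebra $\mathfrak g$, where $\exp\colon\mathfrak g\to G$ is a diffeomorphism and, by the Baker--Campbell--Hausdorff formula, the group law becomes a polynomial map in linear coordinates on $\mathfrak g$; the basis $\{S_i\}$ then corresponds to a strong Malcev basis, and the existence and integrality of the polynomial maps $R,R'$ are inherited from the polynomial group law on $G$. Construction and uniqueness being comparatively routine, the whole difficulty is concentrated in this polynomiality claim.
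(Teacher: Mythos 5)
This theorem is stated in the paper as a quoted result from Leibman \cite{Leibman94} and is given no proof there, so there is nothing internal to compare against; your sketch is the standard construction (refine the isolators of the lower central series into a central series with infinite cyclic factors, read off the basis, get uniqueness of coordinates by downward induction, then establish polynomiality of the group operations in those coordinates) and it is essentially correct. The only substantive mathematical content is the polynomiality of $R$ and $R'$, which you rightly isolate; both routes you propose work, with the caveat that if you use the Malcev completion you should obtain the embedding independently (e.g.\ via the Jennings--Hall embedding of $\Gamma$ into the integral unitriangular matrices, where polynomiality of the group law is manifest) rather than from the coordinate system being constructed, to avoid circularity.
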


From now on we will fix a Malcev basis $\{S_1,\ldots,S_s\}$ of $\Gamma$.

\subsection{The $\Gamma$-polynomial group}\
\medskip

An {\em integral polynomial} is a polynomial taking  integer values at the
integers.

The {\em group $\PG$} is the minimal subgroup of the group $\Gamma^\Z$ of the mappings
$\Z\rightarrow \Gamma$ which contains the constant mappings and is closed with respect
to raising to integral polynomials powers: if $g, h \in \PG$ and $p$ is an integral
polynomial, then $gh\in \PG$, where $gh(n) = g(n)h(n)$, and $g^p \in \PG$, where
$g^p(n) = g(n)^{ p(n)}$. The elements of $\PG$ are called {\em $\Gamma$-polynomials}. $\Gamma$ itself is
a subgroup of $\PG$ and is presented by the constant $\Gamma$-polynomials.

$\Gamma$-polynomials taking  the value $e_\Gamma$ at zero form a subgroup of $\PG$;
we denote it by $\PG_0$: $\PG_0 = \{g\in PG : g(0) = e_\Gamma\}$. Let $\PG_0^*=\{g\in \PG_0: g\not \equiv e_\Gamma\}$.

Every $\Gamma$-polynomial $g$ can be uniquely represented in the form
\begin{equation}\label{eq-111}
  g(n)=\prod_{j=1}^sS_j^{p_j(n)}=S_1^{p_1(n)}S_2^{p_2(n)}\cdots S_s^{p_s(n)},
\end{equation}
where $p_1,\ldots,p_s$ are integral polynomials. If $g\in \PG_0$ then $p_1,\ldots,p_s\in \mathcal{P}_0$ by Theorem~ \ref{thm-5.1}(2).

%\subsection{}
%Define the differentiation with step $a\in \Z$ as the mapping
%$D_a: \Gamma^\Z\rightarrow \Gamma^\Z$ acting by the rule
%\begin{equation*}
%  D_a(g)(n)=g(n)^{-1}g(n+a).
%\end{equation*}
%A primitive of $g\in \Gamma^\Z$ with step $a\in \Z$ is $h\in \Gamma^\Z$ such that $D_a(h)=g$. It is readily checked that the group of $\Gamma$-polynomials $\PG$ is the minimal subgroup of $\Gamma^\Z$
%closed with respect to taking primitives.

\subsection{The weight of $\Gamma$-polynomials}\
\medskip

The {\em weight}, $w(g)$, of a $\Gamma$-polynomial $g(n)=\prod_{j=1}^sS_j^{p_j(n)}$ is the
pair $(l,k)$, $l\in \{0,1,\ldots,s\}$, $k\in \Z_+$ for which $p_j=0$ for any $j > l$ and, if
$l\neq 0$, then $p_l\neq 0$ and ${\rm deg}(p_l) = k$. A weight $(l, k)$ is greater than a weight
$(l',k')$, denoted by $(l,k)>(l',k')$, if $l>l'$ or $l = l'$, $k >k'$.

For example, $S_1^n, S_1^{n^2}S_2^{n^3}, S_1^{n^6}S_2^{n^6}$ have weights $(1,1), (2,3), (2,6)$ respectively, and $(2,6)> (2,3)>(1,1)$.

Let us now define an equivalence relation on $\PG$: $g(n)=\prod_{j=1}^sS_j^{p_j(n)}$ is equivalent to $h(n)=\prod_{j=1}^sS_j^{q_j(n)}$, if $w(g)=w(h)$ and, if it is $(l,k)$,
the leading coefficients of the polynomials $p_l$ and $q_l$ coincide; we write then
$g\sim h$.
For example,
\begin{equation*}
  S_1^nS_3^{n^2}\sim S_3^{n^2+9n} \sim S_1^{n^{12}}S_2^{3n}S_3^{n^2+n}.
\end{equation*}
The {\em weight} of an equivalence class is the weight of any of its elements.

\subsection{System and its weight vector}\
\medskip

A {\em system} $A$ is a finite subset of $\PG$. For a system $A$, if we write $A=\{f_i\}_{i=1}^v$
then we require that $f_i\neq f_j$ for $1\le i\neq j\le v$. For every system $A$ we define its {\em weight vector}
$\phi(A)$ as follows. Let $w_1<w_2<\ldots <w_q$ be the set of the distinct weights of all equivalence classes
appeared in $A$. For $i=1,2,\ldots, q$, let $\phi(w_i)$ be the number of the equivalence classes of elements of $A$ with the weight $w_i$.
Let the weight vector $\phi(A)$ be
\begin{equation*}
  \phi(A)=(\phi(w_1)w_1,\phi(w_2)w_2,\ldots, \phi(w_q)w_q).
\end{equation*}

For example, let $A=\{S_1^n$, $S_1^{2n}$, $S_1^{n^2}$,
$S_1^{n}S_2^{2n^2}$, $S_1^{n^3+n^2}S_2^{2n^2+n}$,
$S_1^{n^5}S_2^{2n^2+2n}$, $S_1^{n^3}S_2^{2n^2+8n}$, \\ $S_1^{n^9+n^5+n}S_2^{n^6+n^2}$, $S_2^{2n^6+n^2}$, $S_1^{n}S_2^{3n^6+n^2}\}$. Then
$\phi(A)=\big(2(1,1),1(1,2),1(2,2), 3(2,6)\big)$.

\medskip

Let $A, A'$ be two systems. We say that $A'$ {\it precedes} a system $A$ if there exists a weight $w$ such
that $\phi(A)(w)>\phi(A')(w)$ and $\phi(A)(u)=\phi(A')(u)$ for all weight $u>w$. We denote it by
$\phi(A)\succ\phi(A')$ or $\phi(A')\prec \phi(A)$.

For example, let $w_1<w_2<\ldots <w_q$ be a sequence of weights, then
\begin{equation*}
  (a_1w_1, \ldots,a_qw_q)\succ (b_1w_1, \ldots, b_qw_q)
\end{equation*}
if and only if $(a_1,\ldots,a_q)>(b_1,\ldots,b_q)$.

\subsection{PET-induction}\
\medskip

In order to prove that a result holds for all systems $A$, we start with the system whose weight
vector is $\{1(1,1)\}$. That is, $A=\{S_1^{c_1n}\}$,
where $c_1\in \Z\setminus \{0\}$. Then let $A\subseteq \PG$ be a system whose weight vector is
greater than $\{1(1,1)\}$, and assume that for all systems $A'$ preceding $A$, we have that the
result holds for $A'$. Once we show that the result still holds for $A$, we complete the whole proof.
This procedure is called the {\em PET-induction}.

\section{Outline of the proof of Theorem \ref{main-1}}\label{section-example}

%The main result of the paper is Theorem \ref{main-thm}. Before proving it, in this section we show the ideas how to prove Theorem \ref{main-1}.
\medskip

To show the general ideas of the proof of Theorem \ref{stronger-thm}, in this section we outline the general idea how to prove Theorem \ref{main-1}. Since we deal with only one transformation in Theorem \ref{main-1}, it is relatively easy to present the basic ideas of the proof and see how PET-induction works.
In Section \ref{section-proof of Main}, we will give the complete proof of Theorem \ref{stronger-thm}.

%These ideas are the keys to prove Theorem \ref{main-thm}.

\subsection{}
Throughout this section, $p_1, \ldots, p_d\in \mathcal{P}_0^*$ are
distinct polynomials, and $(X,T)$ is a weakly mixing minimal system.
By Lemma \ref{diagonal1}, it suffices to show that $(X,T)$ is $\{(T^{p_1(n)}, \ldots, T^{p_d(n)}\}_\D$-transitive. And in fact we will prove a stronger result:
\begin{itemize}
  \item If $p_1, \ldots, p_d\in \mathcal{P}_0^*$ are
distinct polynomials, then $(X,T)$ is $\{T^{p_1(n)},\ldots, T^{p_d(n)}\}$-thickly-syndetic transitive, and it is $\{T^{p_1(n)},\ldots, T^{p_d(n)}\}_\D$-syndetic transitive.
\end{itemize}

\subsection{The PET-induction}\label{subsection-PET}\
\medskip

\subsubsection{}
Now $\Gamma=\Z=\langle T\rangle$, and $\PG=\{T^{p(n)}: p\in \mathcal{P}\}$. For each $T^{p(n)}\in \PG$,
its weight $w(T^{p(n)})=(1,k)$, where $k$ is the degree of $p(n)$. A system $A$ has the form of \break $\{ T^{p_1(n)}, T^{p_2(n)},
\ldots, T^{p_d(n)} \}$, where $p_1, \ldots, p_d\in \mathcal{P}$ are
distinct polynomials. Its weight vector $\phi(A)$ has the form of
\begin{equation*}
  \big(a_1(1,1), a_2 (1,2),\ldots, a_k (1,k) \big).
\end{equation*}

For example, the weight vector of $\{T^{c_1n},\ldots,T^{c_mn}\}$ is $\big(m(1,1)\big)$ if $c_1,\ldots,c_m$
are distinct and non-zero; the weight vector of $\{T^{an^2+b_1n}, \ldots, T^{an^2+b_dn}\}$ ($a\not=0$)
is $\big(1(1,2)\big)$; and the weight vector of $\{T^{an^2+b_1n}, \ldots, T^{an^2+b_dn},
T^{c_1n},\ldots, T^{c_mn}\}$ ($a\not=0$ and $c_1,\ldots,c_m$ are distinct and non-zero) is $\big( m(1,1), 1(1,2)\big)$;
and the weight vector of the general polynomials of degree $\le 2$ is $\big(m(1,1),k(1,2)\big)$.

Under the order of weight vectors, one has
\begin{equation*}
  \begin{split}
  & \big(1(1,1)\big)<\big(2(1,1)\big)<\ldots <\big(m(1,1)\big)<\ldots<\big(1(1,2)\big)<\big(1(1,1), 1(1,2)\big)<\ldots <\\
  & \big(m(1,1), 1(1,2)\big)<\ldots <\big( 2(1,2)\big)< \big(1(1,1), 2(1,2)\big)<\ldots <\big(m(1,1), 2(1,2)\big)<\ldots<\\
  &\big(m(1,1), k(1,2)\big)<\ldots < \big(1(1,3)\big)<\big(1(1,1), 1(1,3)\big)<\ldots< \big(m(1,1), k(1,2), 1(1,3)\big)\\
  &< \ldots < \big(2(1,3)\big)<\ldots< \big(a_1(1,1), a_2 (1,2),\ldots, a_k (1,k) \big)<\ldots.
  \end{split}
\end{equation*}

\subsubsection{}
To prove Theorem \ref{main-1}, we will use induction on the weight vectors.
We start from the systems with the weight vector $(1(1,1))$, i.e. $A=\{T^{a_1n}\}$.
After that, we assume that the result holds for all systems whose weight vectors are
$< \big(a_1(1,1), a_2 (1,2),\ldots, a_k (1,k) \big)$. Then we show
that the result also holds for the system with weight vector \break $\big(a_1(1,1), a_2 (1,2),\ldots, a_k (1,k) \big)$, and hence the proof is completed.

To illustrate the basic ideas, we show the result for the system $A=\{T^{n^2}, T^{2n^2}\}$, whose weight vector is $\big(2 (1, 2) \big)$.
The general proof of Theorem \ref{main-1} is similar, and we omit it here. We will give the details in the proof of Theorem \ref{stronger-thm}.

\subsection{Example: $(X,T)$ is $\{T^{n^2}, T^{2n^2}\}_\D$-transitive.}\
\medskip

To show this example, we need to verify the following cases one by one:

{\it

\medskip

\noindent {\bf Case 1} when the  weight vector is $\big(d(1,1)\big)$:
$(X,T)$ is $\{T^{a_1n},\ldots,T^{a_dn}\}_\D$-syndetic transitive, where $a_1,\ldots, a_d\in \Z\setminus\{0\}$ are distinct integers.

\medskip

\noindent {\bf Case 2} when the weight vector is $\big(1(1,2)\big)$:

\begin{enumerate}
\item $(X,T)$ is $\{T^{an^2+b_1n},\ldots,T^{an^2+b_dn}\}$-thickly-syndetic transitive,
\item $(X,T)$ is $\{T^{an^2+b_1n},\ldots,T^{an^2+b_dn}\}_\D$-syndetic transitive, where $b_1,\ldots,b_d$ are distinct integers and $a\in \Z\setminus\{0\}$.
\end{enumerate}

\medskip

\noindent {\bf Case 3} when the weight vector is $\big(r(1,1), 1(1,2)\big)$:

\begin{enumerate}
 \item $(X,T)$ is $\{T^{c_1n},\ldots,T^{c_rn},T^{an^2+b_1n},\ldots,T^{an^2+b_dn}\}$-thickly-syndetic transitive,

 \item  $(X,T)$ is $\{T^{c_1n},\ldots,T^{c_rn},T^{an^2+b_1n},\ldots,T^{an^2+b_dn}\}_\D$-syndetic transitive, where $a\in \Z\setminus\{0\}$, $b_1,\ldots,b_d$ are distinct integers and $c_1,\ldots,c_r$ are distinct non-zero integers.
\end{enumerate}

\noindent {\bf Case 4} when the weight vector is $\big(2(1,2)\big)$:
\begin{enumerate}
\item $(X,T)$ is $\{T^{n^2}, T^{2n^2}\}$-thickly-syndetic transitive,
\item $(X,T)$ is $\{T^{n^2}, T^{2n^2}\}_\D$-transitive.
\end{enumerate}}

\subsubsection{Case 1: $(X,T)$ is $\{T^{a_1n},\ldots,T^{a_dn}\}_\D$-syndetic transitive, where $a_1,\ldots, a_d$ are distinct non-zero integers.}\label{Case1}\
%\medskip

\begin{proof}
We will prove Case 1 by induction on $d$. By Lemma \ref{lem-hy}, Case 1 holds for $d=1$. Now we assume that the
result holds for $d\ge 1$. That is, for any non-empty open subsets $U, V_1,\ldots, V_d$ of $X$ and for distinct non-zero integers $c_1,\ldots, c_d$,
\begin{equation*}
 \{n\in \Z: U\cap T^{-c_1n}V_1 \cap\ldots\cap T^{-c_dn}V_d\neq \emptyset\}
\end{equation*}
is a syndetic set.

\medskip

Now let $U, V_1,\ldots, V_d,V_{d+1}$ be non-empty open subsets of $X$ and $a_1,\ldots, a_d, a_{d+1}$
are distinct non-zero integers. We will show that
\begin{equation*}
  N:=\{n\in \Z: U\cap T^{-a_1n}V_1 \cap\ldots\cap T^{-a_dn}V_d\cap T^{-a_{d+1}n}V_{d+1}\neq \emptyset\}
\end{equation*}
is syndetic. Write $p_1(n)=a_1n, \ldots, p_{d+1}(n)=a_{d+1}n$.

Since $(X,T)$ is minimal, there is some $\ell \in \N$ such that $X=\bigcup_{j=0}^\ell T^{j}U$.

By Corollary \ref{cor-hy}, $(X^{d+1}, T^{a_1}\times \ldots \times T^{a_{d+1}})$ is weakly mixing.
By Lemma \ref{lem-KO}, there are non-empty subsets $V_1^{(\ell)},\ldots,V_{d+1}^{(\ell)}$
and integers $k_0, k_1,\ldots,k_\ell$ such that for each $i=1,2,\ldots,d+1$, one has that
\begin{equation*}
  T^{p_i(k_j)}T^{-j}V_i^{(\ell)}\subseteq V_i,  \quad \text{for all}\quad 0\le j\le \ell.
\end{equation*}

Let $q_1(n)=p_2(n)-p_1(n)=(a_2-a_1)n,\ldots, q_d(n)=p_{d+1}(n)-p_1(n)=(a_{d+1}-a_1)n$. Since $a_2-a_1,\ldots,a_{d+1}-a_1$
are distinct non-zero integers, by the induction hypothesis,
\begin{equation*}
  E=\{n\in \Z: V_1^{(\ell)}\cap T^{-q_1(n)}V_2^{(\ell)} \cap\ldots\cap T^{-q_d(n)}V_{d+1}^{(\ell)}\neq \emptyset \}
\end{equation*}
is syndetic.

Let $m\in E$. Then there is some $x_m\in V_1^{(\ell)}$ such that $T^{q_i(m)}x_m\in V_{i+1}^{(\ell)}$
for $i=1,\ldots,d$. Clearly, there is some $y_m\in X$ with $y_m=T^{-p_1(m)}x$. Since $X=\bigcup_{j=0}^\ell T^{j}U$,
there is some $b_m\in \{0,1,\ldots, \ell\}$ such that $T^{b_m}z_m=y_m$ for some $z_m\in U$. Thus for each $i=1,2,\ldots, d+1$
\begin{equation*}
  \begin{split}
  T^{p_i(m+k_{b_m})}z_m& =T^{p_i(m+k_{b_m})} T^{-b_m}y_m=T^{p_i(m+k_{b_m})} T^{-b_m}T^{-p_1(m)}x_m\\
  &= T^{p_i(k_{b_m})}T^{-b_m}T^{p_i(m)-p_1(m)}x_m\\
  &=T^{p_i(k_{b_m})}T^{-b_m}T^{q_{i-1}(m)}x_m \quad (\text{Let $q_0(n)=0$})\\
  & \in T^{p_i(k_{b_m})}T^{-b_m} V_{i}^{(\ell)}\subseteq V_{i}.
  \end{split}
\end{equation*}
That is,
$$z_m\in U\cap T^{-p_1(n)}V_1 \cap\ldots\cap T^{-p_d(n)}V_d\cap T^{-p_{d+1}(n)}V_{d+1},$$
where $n=m+k_{b_m}$. Thus
\begin{equation*}
  N\supseteq \{m+k_{b_m}:m\in E\}
\end{equation*}
is a syndetic set. By induction the proof is completed.
\end{proof}

\begin{rem}
Note that Case {\red 1} is the strengthened version of Glasner's theorem.
\end{rem}

\subsubsection{Case 2: (1) $(X,T)$ is $\{T^{an^2+b_1n},\ldots,T^{an^2+b_dn}\}$-thickly-syndetic transitive,
where $b_1,\ldots,b_d$ are distinct integers and $a\in \Z\setminus\{0\}$}\label{Case2}
\
%\medskip

\begin{proof}
Since the family of thickly-syndetic sets is a filter, it suffices to show that for any $p(n)=an^2+bn$ ($a\neq 0$, $a,b\in \Z$), one has that for all non-empty open sets $U,V\subseteq X$
\begin{equation*}
  N_p(U,V)=\{n\in \Z: U\cap T^{-p(n)}V\neq \emptyset \}
\end{equation*}
is thickly-syndetic.

Since $(X,T)$ is minimal, there is some $\ell\in \N$ such that $X=\bigcup_{i=0}^\ell T^{i}U$.

Let $L\in \N$ and let $k_i=i(L+2)$ for all $i\in \{0,1,\ldots, \ell\}$. Since $(X,T)$ is
weakly mixing and minimal, by Lemma \ref{lem-hy}
$$C:=\bigcap_{{(i,j)\in {\{0,1,\ldots,\ell\}\times \{0,1,\ldots,L\}}}} \{ k\in \mathbb{Z}:
V\cap T^{-k}\big( T^{p(k_i+j)-i})^{-1}V\big)\neq \emptyset\}$$
is a thickly-syndetic set. Choose $c\in C$. Then for any $(i,j)\in \{0,1,\ldots,\ell\}\times \{0,1,\ldots,L\}$ one has
$$V_{i,j}:=V\cap (T^{p(k_i+j)+c-i})^{-1}V$$
is a non-empty open subset of $V$ and
$$T^{p(k_i+j)+c-i}V_{i,j}\subset V.$$

Let $p_{i,j}(n)=p(k_i+j+n)-p(k_i+j)-p(n)=2ak_in+2ajn$ for any $(i,j)\in \{0,1,\ldots,\ell\}\times \{0,1,\ldots,L\}$.
Since $k_i=i(L+2)$, $p_{i,j}$ are distinct for all $(i,j)\in \{0,1,\ldots,\ell\}\times \{0,1,\ldots,L\}$.
By Case {\red 1},
$$D:=\{n \in \mathbb{Z}: V\cap \bigcap_{(i,j)\in \{0,1,\ldots,\ell\}\times \{0,1,\ldots,L\}} T^{-p_{i,j}(n)}V_{i,j}\neq \emptyset\}$$
is a syndetic set.

For $m\in D$, there exists $x_m\in V$ such that $T^{p_{i,j}(m)}x_m\in V_{i,j}$
for any $(i,j)\in \{0,1,\ldots,\ell\}\times \{0,1,\ldots,L\}$.
Let $y_m=T^{-p(m)}x_m$. Since $X=\bigcup_{i=0}^{\ell}T^{i}U$, there are $z_m\in U$ and $0\le b_m\le \ell$ such that $T^{c}y_m=T^{b_m}z_m$.
Then $z_m=T^{-p(m)+c-b_m}x_m$ and we have
\begin{equation*}
\begin{split}
T^{p(m+k_{b_m}+j)}z_m&=T^{p(m+k_{b_m}+j)}T^{-p(m)+c-b_m}x_m\\
&  = T^{p(k_{b_m}+j)+c-b_m}\big(T^{p(k_{b_m}+j+m)-p(k_{b_m}+j)-p(m)}x_m\big)\\
&=T^{p(k_{b_m}+j)+c-b_m}(T^{p_{b_m,j}(m)}x_m)\\
& \in T^{p(k_{b_m}+j)+c-b_m}V_{b_m,j}\subset  V
\end{split}
\end{equation*}
for each for $j\in \{0,1,\ldots,L\}$.
Thus $$\{ m+k_{b_m}+j:0\le j\le L\} \subset N_p(U,V).$$
Hence the set $\{ n\in \mathbb{Z}: n+j\in N_p(U,V) \text{ for any }j\in \{0,1,\ldots,L\}\}$
contains the syndetic set $\{m+k_{b_m}:m\in D\}$.
As $L\in \mathbb{N}$ is arbitrary, $N_p(U,V)$ is a thickly-syndetic set.
\end{proof}

\subsubsection{Case 2: (2) $(X,T)$ is $\{T^{an^2+b_1n},\ldots,T^{an^2+b_dn}\}_\D$-syndetic transitive, where $b_1,\ldots,b_d$ are distinct integers and $a\in \Z \setminus \{0\}$. } \label{Case3}
\

\begin{proof}
Let $p_1(n)=an^2+b_1n,\ldots,p_d(n)=an^2+b_dn$. We will show for any given non-empty open subsets $U, V_1,\ldots, V_d$ of $X$
$$N=\{n\in \Z: U\cap (T^{-p_1(n)}V_1\cap  \ldots \cap T^{-p_d(n)}V_d)\not=\emptyset\}$$
is syndetic.

Since $(X,T)$ is minimal, there is some $\ell\in \N$ such that $X=\bigcup_{i=0}^\ell T^{i}U$.
Then by Case {\red 2}(1) and Lemma \ref{freedom} there are integers $\{k_j\}_{j=0}^{\ell}$ and non-empty open sets $V_i^{(\ell)}\subset V_i$, $1\le i\le d$ such that
$|k_{j}|>|k_{j-1}|+\sum_{i=1}^d |b_i|$ for $j=0,\cdots,\ell$ ($k_{-1}=0$)  and
$$T^{p_i(k_j)}T^{-j}V_i^{(\ell)}\subset V_i,\ \  0\le j\le \ell, 1\le i\le d.$$
Let $q_i(m,n)=p_i(n+m)-p_i(m)-p_1(n)$ for $n,m\in \mathbb{Z}$ and $i=1,\cdots,d$.
Since $|k_{j}|>|k_{j-1}|+\sum_{i=1}^d |b_i|$ for $j=0,\cdots,\ell$, we have that all $q_i(k_j,n)$ are distinct polynomials in $n$ with degree 1 for
$0\le j\le \ell, 1\le i\le d$.
By Case {\red 1},
\begin{equation*}
  E=\{n\in \Z: V_1^{(\ell)}\cap \bigcap_{j=1}^\ell\Big( T^{-q_1(k_j,n)}V_1^{(\ell)} \cap\ldots\cap T^{-q_d(k_j,n)}V_{d}^{(\ell)}\Big)\neq \emptyset \}
\end{equation*}
is syndetic.

Let $m\in E$. Then there is some $x_m\in V_1^{(\ell)}$ such that
$$T^{q_i(k_j,m)}x_m\in V_{i}^{(\ell)}\ \text{for all $1\le i\le d$ and $0\le j\le \ell$.}$$
Clearly, there is some $y_m\in X$ such that $y_m=T^{-p_1(m)}x$. Since $X=\bigcup_{j=0}^\ell T^{j}U$, there is some $b_m\in \{0,1,\ldots, \ell\}$ such that $T^{b_m}z_m=y_m$ for some $z_m\in U$. Thus for each $i=1,2,\ldots, d$
\begin{equation*}
  \begin{split}
  T^{p_i(m+k_{b_m})}z_m& =T^{p_i(m+k_{b_m})} T^{-b_m}y_m=T^{p_i(m+k_{b_m})} T^{-b_m}T^{-p_1(m)}x_m\\
  &= T^{p_i(k_{b_m})}T^{-b_m}T^{p_i(m+k_{b_m})-p_i(k_{b_m})-p_1(m)}x_m\\
  &=T^{p_i(k_{b_m})}T^{-b_m}T^{q_{i}(k_{b_m}, m)}x_m \\
  & \in T^{p_i(k_{b_m})}T^{-b_m} V_{i}^{(\ell)}\subseteq V_{i}.
  \end{split}
\end{equation*}
That is,
$$z_m\in U\cap T^{-p_1(n)}V_1 \cap\ldots\cap T^{-p_d(n)}V_d,$$
where $n=m+k_{b_m}$. Thus
\begin{equation*}
  N\supseteq \{m+k_{b_m}:m\in E\}
\end{equation*}
is a syndetic set. The proof is completed.
\end{proof}

\subsubsection{Case 3: (1) $(X,T)$ is
$\{T^{c_1n},\ldots,T^{c_rn},T^{an^2+b_1n},\ldots,T^{an^2+b_dn}\}$-thickly-syndetic transitive, where $a\in \Z\setminus\{0\}$, $b_1,\ldots,b_d$ are distinct integers and $c_1,\ldots,c_r$ are distinct non-zero integers.}\label{Case4}
\
%\medskip

\begin{proof}
It follows from Case {\red 2}(1) and Lemma \ref{lem-hy}.
\end{proof}

\subsubsection{Case 3: (2) $(X,T)$ is $\{T^{c_1n},\ldots,T^{c_rn},T^{an^2+b_1n},\ldots,T^{an^2+b_dn}\}_\D$-syndetic transitive, where $a\in \Z\setminus\{0\}$, $b_1,\ldots,b_d$ are distinct integers and $c_1,\ldots,c_r$ are distinct non-zero integers.}\label{Case5}
\
%\medskip

\begin{proof}
The proof is almost the same to the proof of Case {\red 2} (2). The only difference is that we need to deal with it by induction on $r$.

Let $p_1(n)=c_1n,\ldots,p_r(n)=c_rn$, $p_{r+1}(n)=an^2+b_1n,\ldots,p_{r+d}(n)=an^2+b_dn$. We will show for any given non-empty open sets $U, V_1,\ldots, V_t$ (where $t=r+d$)
$$N=\{n\in \Z: U\cap (T^{-p_1(n)}V_1\cap  \ldots \cap T^{-p_t(n)}V_t)\not=\emptyset\}$$
is syndetic.

Since $(X,T)$ is minimal, there is some $\ell\in \N$ such that $X=\bigcup_{i=0}^\ell T^{i}U$.
Then by Case {\red 3}(1) and Lemma \ref{freedom} there are integers $\{k_j\}_{j=0}^{\ell}$ and non-empty open sets $V_i^{(\ell)}\subset V_i$, $1\le i\le t$ such that
$|k_{j}|>|k_{j-1}|+\sum_{i=1}^d |b_i|$ for $j=0,\cdots,\ell$ (here $k_{-1}=0$)  and
$$T^{p_i(k_j)}T^{-j}V_i^{(\ell)}\subset V_i,\ \  0\le j\le \ell, 1\le i\le t.$$

Let $q_i(m,n)=p_i(n+m)-p_i(m)-p_1(n)$ for $n,m\in \mathbb{Z}$ and $i=1,\cdots,t$.
Then $$q_i(k_j,n)=(c_i-c_1)n$$ for $n\in \mathbb{Z}$,  $1\le i\le r$ and $0\le j\le \ell$. Since $|k_{j}|>|k_{j-1}|+\sum_{i=1}^d |b_i|$ for $j=0,\cdots,\ell$,
we have that all $q_{r+i}(k_j,n)=an^2+(2ak_j+b_i-c_1)n$ are distinct polynomials in $n$ with degree 2 for $0\le j\le \ell, 1\le i\le d$.

Note that $q_1(k_j,n)=0$ for $n\in \mathbb{Z}$ and $0\le j\le \ell$.
By Case {\red 2}(2) if $r=1$, or by the inductive assumption if $r\ge 2$,
\begin{align*}
  E&=\{n\in \Z: V_1^{(\ell)}\cap \bigcap_{j=1}^\ell\Big( T^{-q_1(k_j,n)}V_1^{(\ell)} \cap\ldots\cap T^{-q_t(k_j,n)}V_{t}^{(\ell)}\Big)\neq \emptyset \}\\
&=\{n\in \Z: V_1^{(\ell)}\cap \Big( \bigcap_{i=2}^r T^{-(c_i-c_1)n}V_i^{(\ell)} \Big) \cap  \bigcap_{j=1}^\ell \Big( \bigcap_{i=1}^d T^{-q_{r+i}(k_j,n)}V_{r+i}^{(\ell)}\Big)\neq \emptyset \}
\end{align*}
is syndetic.
The rest of proof is the same to the proof in Case {\red 2}(2).
\end{proof}

\subsubsection{Case 4: (1) $(X,T)$ is $\{T^{n^2}, T^{2n^2}\}$-thickly-syndetic transitive.}\label{Case6}
\
%\medskip

\begin{proof}
It follows from Case {\red 2}(1).
\end{proof}

\subsubsection{Case 4: (2) $(X,T)$ is $\{T^{n^2}, T^{2n^2}\}_\D$-transitive.}\label{Case7}
\
%\medskip

\begin{proof}
The proof is almost the same to the proof of Case {\red 2}(2).
Let $p_1(n)=n^2$, $p_2(n)=2n^2$. We will show for any given non-empty open subsets $U, V_1, V_2$ of $X$
$$N=\{n\in \Z: U\cap (T^{-p_1(n)}V_1\cap  T^{-p_2(n)}V_2)\not=\emptyset\}$$
is syndetic.

Since $(X,T)$ is minimal, there is some $\ell\in \N$ such that $X=\bigcup_{i=0}^\ell T^{i}U$.
Then by Case {\red 4}(1) and Lemma \ref{freedom} there are integers $\{k_j\}_{j=0}^{\ell}$ and non-empty open sets $V_i^{(\ell)}\subset V_i$, $1\le i\le \ell$ such that
$|k_{j}|>|k_{j-1}|$ for $j=0,\cdots,\ell$ (here $k_{-1}=0$) and
$$T^{p_i(k_j)}T^{-j}V_i^{(\ell)}\subset V_i,\ \  1\le j\le \ell, 1\le i\le 2.$$

Let $q_i(m,n)=p_i(n+m)-p_i(m)-p_1(n)$  for $n,m\in \mathbb{Z}$ and $i=1,2$. Since $\{|k_j|\}$ is an increasing sequence of natural numbers, we have that
all $$q_i(k_j,n)=\begin{cases} 2k_jn \, &\text{ if }i=1 \\n^2+2k_jn \, &\text{ if } i=2 \end{cases}$$
are distinct non-constant polynomials in $n$ for $0\le j\le \ell, 1\le i\le 2$.
By Case {\red 3}(2),
\begin{equation*}
  E=\{n\in \Z: V_1^{(\ell)}\cap \bigcap_{j=1}^\ell\Big( T^{-q_1(k_j,n)}V_1^{(\ell)} \cap T^{-q_2(k_j,n)}V_{2}^{(\ell)}\Big)\neq \emptyset \}
\end{equation*}
is syndetic. The same proof to the Case {\red 2}(2), for all $m\in E$ one finds some $b_m\in \{0,\ldots, \ell\}$ such that $m+k_{b_m}\in N$, and hence
\begin{equation*}
  N\supseteq \{m+k_{b_m}:m\in E\}
\end{equation*}
is a syndetic set. The proof is completed.
\end{proof}

\section{Proof of Theorems \ref{stronger-thm} and \ref{ye00}}\label{section-proof of Main}

In this section, we give a proof of Theorems \ref{stronger-thm} and \ref{ye00}.

\subsection{}

Let $(X,\Gamma)$ be a t.d.s., where $\Gamma$ is a nilpotent group such that for each $T\in \Gamma$, $T\neq e_\Gamma$,
is weakly mixing and minimal. Thus, $\Gamma$ is a nilpotent group without torsion.
For $d,k\in \N$ let $T_1,\ldots,T_d\in \Gamma$, and $p_{i,j}\in \mathcal{P}_0, 1\le i\le k, 1\le j\le d$
such that the expressions
\begin{equation*}
  g_i(n)=T_1^{p_{i,1}(n)}\cdots T_d^{p_{i,d}(n)}
\end{equation*}
depends nontrivially on $n$ for $i=1,2,\cdots,k$, and for all $i\neq j\in \{1,2,\ldots,k\}$ the expressions $g_i(n)g_j(n)^{-1}$
depend nontrivially on $n$.
%Our aim is to show that there is a dense $G_\delta$ set $X_0$ of $X$ such that for all $x\in X_0$
%\begin{equation*}
%  \{(g_1(n)x,g_2(n)x,\ldots,g_k(n)x)\in X^k:n\in \mathbb{Z}\}
%\end{equation*}
%is dense in $X^k$.

By Lemma \ref{diagonal1}, to prove Theorem \ref{stronger-thm} it remains to show that
for any given non-empty open sets $U, V_1,\ldots, V_k$ of $X$
there is $n\in\mathbb{N}$ such that
\begin{equation*}
  U\cap (g_1(n)^{-1}V_1\cap \ldots \cap g_k(n)^{-1}V_k)\not=\emptyset,
\end{equation*}
i.e. $(X,\Gamma)$ is {\em $A_\D$-transitive}. Moreover, we also need to show
it is $A$-thickly-syndetic transitive in the same time, where $A=\{g_1,\ldots,g_k\}$.

%\medskip

%In fact we will prove the following reinforced theorem of Theorem \ref{main-thm}:

%\medskip

%\noindent {\bf Theorem A:}\quad {\em For any system $A=\{g_1,\ldots,g_k\}\subseteq \PG_0^*$,  $(X,\Gamma)$ is $A_\D$-syndetic transitive.}

%\medskip

\subsection{Some lemmas}\
\medskip

\subsubsection{Some basic results by Leibman}

\begin{lem}\label{lem-gamma}\cite[Lemma 2.4.]{Leibman94}
Let $g$ be a $\Gamma$-polynomial.
\begin{enumerate}
  \item If $h$ is a $\Gamma$-polynomial and $g'=h^{-1}gh$, then $g'\sim g$.
  \item If $m\in \N$ and $g'$ is defined by $g'(n)=g^{-1}(m)g(n+m)$, then $g'\sim g$.
  \item \begin{enumerate}
          \item If $g', h$ are $\Gamma$-polynomials such that $g'\sim g$, $h\not\sim g $ and $w(h)\preccurlyeq w(g)$,
          then $g'h^{-1}\sim gh^{-1}$ and $w(gh^{-1})=w(g)$
          \item If $h\neq e_\Gamma$ is a $\Gamma$-polynomial such that $h\sim g$, then $w(gh^{-1}) \prec w(g)$.
        \end{enumerate}
\end{enumerate}
\end{lem}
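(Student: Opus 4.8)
The plan is to reduce every assertion to a computation with the single ``top'' Malcev coordinate of $g$, by passing to a quotient in which that coordinate becomes central. Write $w(g)=(l,k)$ and $g(n)=\prod_{j=1}^s S_j^{p_j(n)}$, so that $p_j=0$ for $j>l$ and $\deg p_l=k$, and set $\Gamma_{l-1}=\langle S_1,\ldots,S_{l-1}\rangle$. First I would check, using only the Malcev relation of Theorem~\ref{thm-5.1}(1) that $[S_i,S_j]\in\langle S_1,\ldots,S_{i-1}\rangle$ for $i<j$, that $\Gamma_{l-1}$ is a normal subgroup and that the image $\overline{S_l}$ of $S_l$ is \emph{central} in $\overline{\Gamma}:=\Gamma/\Gamma_{l-1}$; indeed $[S_l,S_m]\in\Gamma_{l-1}$ for every $m$ (directly for $m>l$, and after passing to $[S_m,S_l]^{-1}$ for $m<l$). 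Since $\Gamma_{l-1}$ is a term of the Malcev filtration, uniqueness of coordinates in $\Gamma$ descends to a unique expression $\overline{S_l}^{\,a_l}\cdots\overline{S_s}^{\,a_s}$ for each element of $\overline{\Gamma}$; hence the coordinates of index $\ge l$ of any $\Gamma$-polynomial coincide with the coordinates of its image in $\overline{\Gamma}$, and both $w(g)$ and the relation $\sim$ among polynomials of weight $\preccurlyeq(l,k)$ can be read off from these images, where $\overline{g}(n)=\overline{S_l}^{\,p_l(n)}$ is a power of the central element $\overline{S_l}$.

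Granting this, parts (1) and (2) are immediate. For (1), centrality of $\overline{g}(n)$ gives $\overline{g'}(n)=\overline{h}(n)^{-1}\overline{g}(n)\overline{h}(n)=\overline{g}(n)$, so $g'$ and $g$ have identical coordinates of index $\ge l$, whence $g'\sim g$. For (2), $\overline{g'}(n)=\overline{g}(m)^{-1}\overline{g}(n+m)=\overline{S_l}^{\,p_l(n+m)-p_l(m)}$; the exponent is a polynomial in $n$ of degree $k$ with the same leading coefficient as $p_l$, so $w(g')=(l,k)$ with matching leading coefficient and $g'\sim g$.

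For part (3), writing $h(n)=\prod_j S_j^{q_j(n)}$ and $w(h)=(l'',k'')$, the hypothesis $w(h)\preccurlyeq(l,k)$ forces $l''\le l$, so $\overline{h}(n)=\overline{S_l}^{\,q_l(n)}$ with $q_l=0$ when $l''<l$; hence $\overline{gh^{-1}}(n)=\overline{S_l}^{\,p_l(n)-q_l(n)}$ and $\overline{g'h^{-1}}(n)=\overline{S_l}^{\,p_l'(n)-q_l(n)}$, where $p_l'$ (the top coordinate of $g'$) has degree $k$ and the same leading coefficient $a$ as $p_l$ because $g'\sim g$. For (a) the non-equivalence $h\not\sim g$ splits into two cases: either $l''<l$ or ($l''=l$, $k''<k$), where $\deg q_l<k$ so $p_l-q_l$ and $p_l'-q_l$ both have degree $k$ and leading coefficient $a$; or $l''=l$, $k''=k$ with the leading coefficient $b$ of $q_l$ satisfying $b\ne a$, where $p_l-q_l$ and $p_l'-q_l$ both have degree $k$ and leading coefficient $a-b\ne0$. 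In every case $w(gh^{-1})=(l,k)=w(g)$ and, the two top coordinates sharing their leading coefficient, $g'h^{-1}\sim gh^{-1}$. For (b), $h\sim g$ means $k''=k$ and $q_l$ has the same leading coefficient $a$ as $p_l$, so the degree-$k$ terms cancel: either $\deg(p_l-q_l)<k$, giving $w(gh^{-1})=(l,\deg(p_l-q_l))\prec(l,k)$, or $p_l-q_l\equiv0$, which forces all coordinates of $gh^{-1}$ of index $\ge l$ to vanish and hence the weight index to drop below $l$; either way $w(gh^{-1})\prec w(g)$.

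The conceptual heart of the argument is the first paragraph: recognizing that quotienting by $\Gamma_{l-1}$ makes the leading coordinate central collapses the noncommutative group computations into scalar arithmetic on the exponent $p_l$. The only points needing genuine care are the normality of $\Gamma_{l-1}$ and the centrality of $\overline{S_l}$ (both pure consequences of the Malcev commutator relations), together with the bookkeeping that the coordinates of index $\ge l$ are faithfully recorded by the image in $\overline{\Gamma}$; once these are in place, parts (1)--(3) reduce to elementary statements about the degree and leading coefficient of $p_l(n)-q_l(n)$ and its shifts, and the remaining case analysis in (3) is routine provided it is made exhaustive.
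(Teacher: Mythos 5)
Your argument is correct. Note that the paper does not prove this lemma at all — it is imported verbatim as \cite[Lemma 2.4]{Leibman94} — and your proof (pass to $\overline{\Gamma}=\Gamma/\langle S_1,\ldots,S_{l-1}\rangle$, where the Malcev commutator relations make $\overline{S_l}$ central so that all coordinates of index $\ge l$ reduce to additive arithmetic on the exponent $p_l$) is essentially Leibman's own argument, with the normality of $\langle S_1,\ldots,S_{l-1}\rangle$ and the faithfulness of the top coordinates in the quotient correctly identified as the only points requiring verification.
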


\begin{cor}\cite[Corollary 2.5.]{Leibman94}\label{leibman}
Let $A$ be a system.
\begin{enumerate}
  \item If $A'$ is a system consisting of $\Gamma$-polynomials of the form $g' = h^{-1}gh$
for $g\in A$ and $h$ being a $\Gamma$-polynomial, then $\phi(A')\preccurlyeq \phi(A)$.
  \item If $A'$ is a system consisting of $\Gamma$-polynomials $g'$ satisfying the equality
$g'(n)=g^{-1}(m)g(n+m)$ for some $g\in A$ and some $m\in \N$, then $\phi(A')\preccurlyeq \phi(A)$.
  \item Let $h\in A, h\neq e_\Gamma$, be a $\Gamma$-polynomial of weight minimal in $A$:
$w(h)\le w(g)$ for any $g\in A$. If $A'$ is a system consisting of $\Gamma$-polynomials
of the form $g'=gh^{-1}, g\in A$, then $\phi(A')\prec \phi(A)$.
\end{enumerate}
\end{cor}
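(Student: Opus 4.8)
The plan is to prove all three parts by tracking how the indicated operation $g\mapsto g'$ acts on the equivalence classes that make up the system $A$, and to read off the effect on the weight vector $\phi$ directly from Lemma \ref{lem-gamma}. The common mechanism is this: if an operation sends each $g\in A$ to some $g'$ with $g'\sim g$ (or to something of strictly smaller weight), then it induces a weight-non-increasing surjection from the equivalence classes of $A$ onto those of $A'$, and such a surjection forces $\phi(A')(w)\le \phi(A)(w)$ for every weight $w$, hence $\phi(A')\preccurlyeq \phi(A)$.

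For part (1) I would apply Lemma \ref{lem-gamma}(1) to get $g'=h^{-1}gh\sim g$ for every $g\in A$. Define $\Phi\colon A\to A'$ by $\Phi(g)=g'$; it is surjective by the definition of $A'$. Since $g'\sim g$, the map $\Phi$ carries each equivalence class of $A$ into a single class of $A'$ of the same weight, and if $g\not\sim \tilde g$ then $g'\sim g\not\sim\tilde g\sim\tilde g'$ shows $g'\not\sim\tilde g'$, so distinct classes have distinct images. Restricting to a fixed weight $w$ yields a surjection from the weight-$w$ classes of $A$ onto those of $A'$, whence $\phi(A')(w)\le\phi(A)(w)$ for all $w$, i.e. $\phi(A')\preccurlyeq\phi(A)$. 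Part (2) is identical, using Lemma \ref{lem-gamma}(2) in place of (1) to obtain $g'(n)=g^{-1}(m)g(n+m)\sim g$. The only additional care concerns degeneracies: the operation need not be injective, and a constant polynomial is sent to $e_\Gamma$ (the minimal weight). However, any such collapse occurs within a single class and can only lower weights, so it never increases any $\phi(A')(w)$, and the same surjection-on-classes argument gives $\phi(A')\preccurlyeq\phi(A)$.

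Part (3) is where the strict inequality, and the real work, lies. Set $w_0=w(h)$, the minimal weight occurring in $A$, and let $[h]$ be the equivalence class of $h$. For $g\in A$ I distinguish two cases. If $g\not\sim h$, then $w(h)\preccurlyeq w(g)$, and Lemma \ref{lem-gamma}(3)(a) gives $w(gh^{-1})=w(g)$ and shows that $g\mapsto gh^{-1}$ respects $\sim$; since $(gh^{-1})h=g$, applying the lemma in the reverse direction exhibits $\tilde g\mapsto \tilde g h$ as an inverse, so these $g$ are carried to classes of $A'$ of exactly the same weight, in weight-preserving bijection. If $g\sim h$, then since $h\neq e_\Gamma$ Lemma \ref{lem-gamma}(3)(b) gives $w(gh^{-1})\prec w(g)=w_0$, so the entire class $[h]$ is pushed strictly below $w_0$ (in particular $hh^{-1}=e_\Gamma$). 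Consequently, for every weight $w>w_0$ the weight-$w$ classes of $A'$ are in bijection with those of $A$, so $\phi(A')(w)=\phi(A)(w)$; while at weight $w_0$ the class $[h]$ vanishes and no replacement class of weight $w_0$ can appear, since higher-weight elements keep their weight and the elements of $[h]$ drop below $w_0$. Hence $\phi(A')(w_0)=\phi(A)(w_0)-1$, which by the definition of the order on weight vectors is exactly $\phi(A')\prec\phi(A)$.

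The main obstacle is part (3): one must verify, using both halves of Lemma \ref{lem-gamma}(3), that multiplying by $h^{-1}$ leaves the class structure at weights above $w_0$ intact (a genuine bijection, not merely a surjection) while strictly lowering the weight of the single minimal class $[h]$, and that no new weight-$w_0$ class is manufactured in the process. This bookkeeping is precisely what guarantees the strict drop $\phi(A')\prec\phi(A)$ that drives the PET-induction.
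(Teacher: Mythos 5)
The paper contains no proof of this corollary: it is quoted verbatim from \cite[Corollary 2.5]{Leibman94} and is used only as an input to the PET-induction, so there is nothing internal to compare against. Your derivation of it from Lemma \ref{lem-gamma} is correct and is exactly the intended one: in (1) and (2) the relation $g'\sim g$ forces the number of equivalence classes of each weight not to increase, and in (3) Lemma \ref{lem-gamma}(3)(a) preserves the weight of every class not equivalent to $h$ while Lemma \ref{lem-gamma}(3)(b) pushes the single minimal-weight class $[h]$ strictly below $w(h)$, which no other class can replenish. One refinement: the least solid step is your claim in (3) that the classes of weight $w>w(h)$ are in genuine bijection with those of $A$ (the ``inverse map'' obtained by reading Lemma \ref{lem-gamma}(3)(a) backwards is not literally licensed by its statement, since it concerns right multiplication by $h^{-1}$, not by $h$). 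This claim is not needed: it suffices that $\phi(A')(w)\le\phi(A)(w)$ for every $w\ge w(h)$, with strict inequality at $w(h)$, because $\prec$ is decided at the largest weight where the two vectors differ and any merging of classes only decreases counts at weights $\ge w(h)$; new classes created below $w(h)$ (including the collapses to $e_\Gamma$ you flag in (2)) are irrelevant for the same reason. With that simplification the argument is complete.
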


\subsubsection{Additional lemmas}\label{add-1}
To show the main result we find that above lemma and corollary are not enough. We need some additional lemmas
which we shall prove in this subsection.

Using \eqref{eq-111} and Theorem \ref{thm-5.1}(2), it is clear that for $\Gamma$-polynomial $g$, if
$$\{n\in \mathbb{Z}: g(n)=e_{\Gamma}\}$$
is an infinite set then $g \equiv e_{\Gamma}$ since every non-zero integral polynomial has finitely many zero points.
In fact if $|\{n\in \mathbb{Z}: g(n)=e_{\Gamma}\}|>k$ for some $k$ depending only on $g$, then $g \equiv e_{\Gamma}$.

\begin{lem}\label{lem-equi1} Let $f,g\in \PG_0$. Then
\begin{enumerate}
\item If $\{ k'\in \mathbb{Z}: f(k')^{-1}f(n+k')=g(n) \text{ for all }n\in \mathbb{Z}\}$
is an infinite set, then $g(n)=f(n)=(f(1))^n$ for all $n\in \mathbb{Z}$ .

\item If $\{ k'\in \mathbb{Z}: f(k')^{-1}f(n+k')=g(k')^{-1}g(n+k') \text{ for all }n\in \mathbb{Z}\}$
is an infinite set, then $g=f$.
\end{enumerate}
\end{lem}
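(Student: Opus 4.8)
The plan is to exploit the fact, stated just before the lemma, that a $\Gamma$-polynomial vanishing (equal to $e_\Gamma$) at more than $k$ points — where $k$ depends only on the polynomial — must be identically $e_\Gamma$. Both parts assert that a certain identity, holding for infinitely many shift-values $k'$, forces a strong structural conclusion; the natural strategy in each case is to fix a single value of $n$, interpret the set of valid $k'$ as the zero-set of an auxiliary $\Gamma$-polynomial in $k'$, and conclude that this auxiliary polynomial is trivial.

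For part (1), I would define, for each fixed $n\in\Z$, the map $k'\mapsto h_n(k') := f(k')^{-1}f(n+k')g(n)^{-1}$. By the closure properties of $\PG$ (products and translates of $\Gamma$-polynomials are $\Gamma$-polynomials), $h_n$ is a $\Gamma$-polynomial in the variable $k'$. The hypothesis says $h_n(k')=e_\Gamma$ for all $k'$ in a common infinite set, hence $h_n\equiv e_\Gamma$ by the preceding observation. Thus $f(k')^{-1}f(n+k')=g(n)$ holds for \emph{every} $k'\in\Z$ and every $n$. Setting $k'=0$ and using $f(0)=e_\Gamma$ (since $f\in\PG_0$) immediately gives $f(n)=g(n)$. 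To pin down the linear form $f(n)=(f(1))^n$, I would feed the identity $f(k')^{-1}f(n+k')=f(n)$ back in: it says $f(n+k')=f(k')f(n)$, i.e. $f$ is a group homomorphism from $\Z$ to $\Gamma$, which is exactly the statement $f(n)=(f(1))^n$.

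For part (2), the approach is parallel but the auxiliary object combines $f$ and $g$. For fixed $n$, set $\psi_n(k') := \big(f(k')^{-1}f(n+k')\big)\big(g(k')^{-1}g(n+k')\big)^{-1}$; again $\psi_n\in\PG$ as a function of $k'$, and the infinite common zero-set forces $\psi_n\equiv e_\Gamma$ for each $n$, so $f(k')^{-1}f(n+k')=g(k')^{-1}g(n+k')$ for all $k',n$. Putting $k'=0$ turns this into $f(n)=g(n)$ via $f(0)=g(0)=e_\Gamma$, which is the desired conclusion $f=g$.

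The point demanding care — and the main obstacle — is justifying that the set of admissible $k'$ is genuinely a zero-set of a \emph{single} $\Gamma$-polynomial in $k'$, uniformly enough to invoke the finiteness bound. The hypotheses quantify ``for all $n$'' inside the set over $k'$, so strictly the infinite set is $\bigcap_n\{k':\psi_n(k')=e_\Gamma\}$; I would argue that since this intersection is infinite, for each individual $n$ the set $\{k':\psi_n(k')=e_\Gamma\}$ is infinite, whence $\psi_n\equiv e_\Gamma$. One must check that $\psi_n$ really is a $\Gamma$-polynomial in the \emph{single} variable $k'$ (with $n$ a fixed parameter), which follows because $k'\mapsto f(n+k')$ is a translate of $f$ and translation preserves $\PG$; expressing everything through the Malcev-coordinate polynomials of \eqref{eq-111} makes the finiteness-of-zeros argument rigorous. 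Once $\psi_n\equiv e_\Gamma$ is secured, the remaining steps are the short substitutions $k'=0$ described above.
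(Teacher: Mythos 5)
Your proof is correct and follows essentially the same route as the paper: for each fixed $n$ you form the auxiliary $\Gamma$-polynomial in $k'$ (the paper's $p_n(k)=f(k)^{-1}f(n+k)g(n)^{-1}$ and $q_n(k)=f(k)^{-1}f(n+k)(g(k)^{-1}g(n+k))^{-1}$), conclude it vanishes identically from the infinite zero-set, and then substitute $k'=0$ and use the homomorphism identity $f(n+k)=f(k)f(n)$ to get $f(n)=g(n)=(f(1))^n$. The quantifier point you flag is handled in the paper exactly as you suggest, via the containment of the given infinite set in each individual zero-set.
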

\begin{proof} 1). Let $E=\{ k'\in \mathbb{Z}: f(k')^{-1}f(n+k')=g(n) \text{ for all }n\in \mathbb{Z}\}$.
For $n\in \mathbb{Z}$, we consider the $\Gamma$-polynomial $p_n(k)=f(k)^{-1}f(n+k)g(n)^{-1}$ with respect to $k$.
Since $E\subseteq \{ k\in \mathbb{Z}: p_n(k)=e_{\Gamma}\}$, one has $p_n(k)= e_{\Gamma}$ for all $k\in \mathbb{Z}$. This implies
$f(k)^{-1}f(n+k)=g(n)$ for all $n,k\in \mathbb{N}$. Note that $f(0)=e_\Gamma$, so $f(n)=g(n)$ for all $n\in \Z$.
Then it follows from the equation $f(n+k)=f(n)f(k)$ for all $n,k\in \Z$, one has that
$$f(n)=g(n)=f(1)^n,\ \text{for all $n\in \mathbb{Z}$}.$$

2). Let $F=\{ k'\in \mathbb{Z}: f(k')^{-1}f(n+k')=g(k')^{-1}g(n+k') \text{ for all }n\in \mathbb{Z}\}$.
For $n\in \mathbb{Z}$, we consider the polynomial $q_n(k)=f(k)^{-1}f(n+k)(g(k)^{-1}g(n+k))^{-1}$ with respect to $k$.
Since $F\subseteq \{ k\in \mathbb{Z}: q_n(k)=e_{\Gamma}\}$, one has $q_n(k)= e_{\Gamma}$ for all $k\in \mathbb{Z}$. This implies
$f(k)^{-1}f(n+k)=g(k)^{-1}g(n+k)$ for all $n,k\in \mathbb{N}$. Since $f(0)=e_\Gamma$ and $g(0)=e_\Gamma$, one has that $f(n)=g(n)$ for all $n\in \mathbb{Z}$.
\end{proof}

\begin{lem}\label{lem-diff}
Let $f\in \PG_0$. If for each $m\in \mathbb{Z}\setminus \{0\}$ there is  some $n=n(m)\in \mathbb{Z}$ such that $f(m+n)\neq f(m)f(n)$, then for any $\ell,L\in \mathbb{N}$ we can find $k_0,k_1,\cdots,k_\ell\in \mathbb{N}$ such that
\begin{enumerate}
\item $f(k_i+j)^{-1}f(k_i+j+n)f(n)^{-1}\in \PG_0^*$ for any $(i,j)\in \{0,1,\ldots,\ell\}\times \{0,1,\ldots,L\}$.

\item $f(k_i+j)^{-1}f(k_i+j+n)f(n)^{-1}$ and $f(k_{i'}+j')^{-1}f(k_{i'}+j'+n)f(n)^{-1}$ are  distinct
$\Gamma$-polynomials with respect to $n$
for any $(i,j),(i',j')\in \{0,1,\ldots,\ell\}\times \{0,1,\ldots,L\}$ with $(i,j)\neq (i',j')$.
\end{enumerate}
\end{lem}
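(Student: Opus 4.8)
The plan is to reduce both assertions to a single injectivity statement. For $a\ge 0$ write $\Phi_a(n):=f(a)^{-1}f(a+n)$; by Lemma \ref{lem-gamma}(2) each $\Phi_a$ is a $\Gamma$-polynomial lying in $\PG_0$ (it is the constant $f(a)^{-1}$ times a shift of $f$, and $\Phi_a(0)=e_\Gamma$), and the polynomial appearing in the statement factors as
\[
f(k_i+j)^{-1}f(k_i+j+n)f(n)^{-1}=\Phi_{k_i+j}(n)\,f(n)^{-1}\in\PG_0 .
\]
Note $\Phi_0=f$. Thus, writing $h_{i,j}:=\Phi_{k_i+j}\,f^{-1}$, conclusion (1) is exactly the statement $\Phi_{k_i+j}\neq\Phi_0$, and conclusion (2), after cancelling the common right factor $f^{-1}$, is exactly $\Phi_{k_i+j}\neq\Phi_{k_{i'}+j'}$ whenever $(i,j)\neq(i',j')$. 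So everything will follow once I control when two of the $\Phi_a$ coincide.

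The hypothesis is tailor-made for this. Put
\[
G:=\{c\in\Z:\ f(c+n)=f(c)f(n)\ \text{for all}\ n\in\Z\}.
\]
Then $0\in G$ trivially, while the hypothesis "for each $m\neq 0$ there is $n$ with $f(m+n)\neq f(m)f(n)$" says precisely that no nonzero integer lies in $G$; that is, $G=\{0\}$. This is the only use of the hypothesis. The key step is then the claim $\Phi_a=\Phi_b\Rightarrow a-b\in G$: indeed $\Phi_a=\Phi_b$ gives $f(a+n)=f(a)f(b)^{-1}f(b+n)$ for all $n$, and replacing $n$ by $n-b$ and setting $c=a-b$ yields
\[
f(c+n)=f(a+(n-b))=f(a)f(b)^{-1}f(n)\qquad(n\in\Z).
\]
Evaluating at $n=0$ gives $f(c)=f(a)f(b)^{-1}$, and substituting back gives $f(c+n)=f(c)f(n)$ for all $n$, i.e. $c\in G$. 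Together with $G=\{0\}$ this shows that $a\mapsto\Phi_a$ is injective, and in particular $\Phi_a=\Phi_0=f$ forces $a=0$.

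With injectivity in hand the construction is immediate: I only need the $(\ell+1)(L+1)$ integers $k_i+j$ to be distinct and nonzero. Taking $k_i=i(L+1)+1\in\N$, the values $k_i+j$ run bijectively through $\{1,2,\ldots,(\ell+1)(L+1)\}$ as $(i,j)$ ranges over $\{0,\ldots,\ell\}\times\{0,\ldots,L\}$. Since each $k_i+j\neq 0$ we get $\Phi_{k_i+j}\neq\Phi_0=f$, hence $h_{i,j}\in\PG_0^*$, giving (1); since the $k_i+j$ are pairwise distinct, injectivity gives $\Phi_{k_i+j}\neq\Phi_{k_{i'}+j'}$ and therefore $h_{i,j}\neq h_{i',j'}$ for $(i,j)\neq(i',j')$, giving (2).

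The genuine content lies in the key step, and the point to watch is that $\Gamma$ is only nilpotent: one cannot write $f(a-b)=f(a)f(b)^{-1}$, so a naive difference computation is unavailable. The device that circumvents this is the substitution $n\mapsto n-b$ followed by evaluation at $n=0$, which first pins down $f(c)=f(a)f(b)^{-1}$ and only then concludes $c\in G$. As an alternative one could avoid the explicit characterization and apply Lemma \ref{lem-equi1}(1): were $\{a:\Phi_a=\Phi_b\}$ infinite, $f$ would be the homomorphism $n\mapsto f(1)^n$, contradicting the hypothesis; this yields only finiteness of the fibres of $a\mapsto\Phi_a$, after which a sufficiently spread-out choice of the $k_i$ would still work, but the difference argument above is cleaner and delivers outright injectivity.
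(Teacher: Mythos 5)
Your proof is correct, and it takes a genuinely different (and leaner) route than the paper. The paper never establishes injectivity of $a\mapsto f(a)^{-1}f(a+\cdot)$: it only shows that the relevant coincidence sets are \emph{finite} --- the set $K$ via Lemma \ref{lem-equi1}(1) and the sets $E((i,j),(i',j'))$ via a contradiction argument through Lemma \ref{lem-equi1}(2) --- and then chooses $k_i=i(L+2)+u$ with $u$ outside a finite union of exceptional sets. You instead extract the full strength of the hypothesis directly: $G=\{0\}$, and the identity $\Phi_a=\Phi_b\Rightarrow a-b\in G$, proved by the substitution $n\mapsto n-b$ followed by evaluation at $n=0$ (which correctly sidesteps the non-commutativity of $\Gamma$, pinning down $f(c)=f(a)f(b)^{-1}$ before concluding $c\in G$). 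This gives outright injectivity, makes Lemma \ref{lem-equi1} unnecessary for this lemma, and lets you take the explicit choice $k_i=i(L+1)+1$ with no avoidance step. What the paper's finiteness-plus-avoidance pattern buys is uniformity: it is the same template reused in Lemma \ref{lem-equi2}, where the polynomials $f_t$ are genuinely distinct and only finiteness of the coincidence sets is available, so injectivity fails and one must space the $k_i$ out. For the present lemma, though, your argument is strictly simpler and complete.
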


\begin{proof}
Let
$$K=\{k\in\mathbb{Z}: f(k)^{-1}f(n+k)=f(n) \text{ for all }n\in \mathbb{Z}\}.$$
Then by
Lemma \ref{lem-equi1}(1), $K$ is a finite set. Fix $\ell,L\in\mathbb{N}$.
For $(i,j),(i',j')\in \{0,1,\ldots,\ell\}\times \{0,1,\ldots,L\}$ with $(i,j)\neq (i',j')$,
we denote $E((i,j),(i',j'))$ by the set of all $k\in \mathbb{Z}$ satisfying that
$$f(i(L+2)+k+j)^{-1}f(i(L+2)+k+j+n)=f(i'(L+2)+k+j')^{-1}f(i'(L+2)+k+j'+n)$$
for all $n\in \mathbb{Z}$.

We claim that $E((i,j),(i',j'))$ is a finite set. Assume the contrary that $E((i,j),(i',j'))$ is not a finite set.
Let $m=(i'-i)(L+2)+(j'-j)$. Since $(i,j)\neq (i',j')$, one has $m\neq 0$. Put  $g(n)=f(m)^{-1}f(n+m)$ for $n\in \mathbb{Z}$.
Then $g\in \PG_0$ and $g\neq f$ by the assumption of the lemma. Note that
$$g(i(L+2)+k+j)^{-1}g(i(L+2)+k+j+n)=f(i'(L+2)+k+j')^{-1}f(i'(L+2)+k+j'+n)$$
for all $n\in \mathbb{Z}$.  We have
\begin{align*}
& \{ k\in \mathbb{Z}:f(k)^{-1}f(k+n)=g(k)^{-1}g(n+k)\text{ for all }n\in \mathbb{Z}\}\\
&=E((i,j),(i',j'))+(i(L+2)+j)
\end{align*}
is an infinite set. Thus by Lemma \ref{lem-equi1}(2) one has that $f=g$, a contradiction! This shows that
$E((i,j),(i',j'))$ is a finite set.

\medskip

Set %$$E=\bigcup_{ (i,j)\neq (i',j')\in \{0,1,\ldots,\ell\}\times \{0,1,\ldots,L\}}  E((i,j),(i',j')).$$
\begin{equation*}
 E=\bigcup_{ {(i,j)\neq (i',j')\atop{\in \{0,1,\ldots,\ell\}\times \{0,1,\ldots,L\}}}}  E((i,j),(i',j')).
\end{equation*}
Then $E$ is also a finite set. Put $$F=E\cup \{k-(i(L+2)+j): k\in K,(i,j)\in \{0,1,\ldots,\ell\}\times \{0,1,\ldots,L\}\}.$$
It is clear that $F$ is finite.

Now we take $u\in \mathbb{N}\setminus F$.  Let $k_i=i(L+2)+u$ for $i\in \{0,1,\ldots,\ell\}$.
On one hand, for any $(i,j)\in \{0,1,\ldots,\ell\}\times \{0,1,\ldots,L\}$ one has $f(k_i+j)^{-1}f(k_i+j+n)f(n)^{-1}\in \PG_0^*$ as $k_i+j\not \in K$. On the other hand, for $(i,j),(i',j')\in \{0,1,\ldots,\ell\}\times \{0,1,\ldots,L\}$ with $(i,j)\neq (i',j')$ one has that
$f(k_i+j)^{-1}f(k_i+j+n)f(n)^{-1}$ and $f(k_{i'}+j')^{-1}f(k_{i'}+j'+n)f(n)^{-1}$ are distinct $\Gamma$-polynomials with respect to $n$ since $u\not \in
E((i,j),(i',j'))$. Thus we finish the proof of the lemma.
\end{proof}

\begin{lem}\label{lem-equi2}
Let $f_1,f_2,\ldots,f_v\in \PG_0^*$ be distinct $\Gamma$-polynomials. Then there exists a sequence $\{r(i)\}_{i=0}^\infty$ of natural numbers such that
for any $\ell\in \mathbb{N}$ and $k_0,k_1,\cdots,k_\ell\in \mathbb{N}$ with $k_0>r(0)$ and $k_i>k_{i-1}+r(k_{i-1})$ for $i=1,\cdots, \ell$, one has that
\begin{enumerate}
\item $f_t(k_i)^{-1}f_t(n+k_i)f_1(n)^{-1}\in \PG_0^*$ for any $t\in  \{2,\cdots,v\}$ and $i\in \{0,1,\ldots,\ell\}$.
\item  $f_t(k_i)^{-1}f_t(n+k_i)f_1(n)^{-1}$ and $f_s(k_j)^{-1}f_s(n+k_j)f_1(n)^{-1}$ are distinct $\Gamma$-polynomial with respect to $n$ for
any $t\neq s \in \{1,2,\cdots,v\}$ and $i,j\in  \{0,1,\ldots,\ell\}$.
\end{enumerate}
\end{lem}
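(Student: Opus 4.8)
The plan is to reduce everything to the principle recalled just before Lemma~\ref{lem-equi1} --- a $\Gamma$-polynomial equal to $e_\Gamma$ at infinitely many integers is identically $e_\Gamma$ --- together with Lemma~\ref{lem-equi1} itself. Writing $h_{t,i}(n)=f_t(k_i)^{-1}f_t(n+k_i)f_1(n)^{-1}$, one checks $h_{t,i}(0)=e_\Gamma$ since $f_1(0)=e_\Gamma$, so every $h_{t,i}\in\PG_0$; thus (1) asks that $h_{t,i}\not\equiv e_\Gamma$ for $t\ge 2$, and (2) asks $h_{t,i}\ne h_{s,j}$ for $t\ne s$. Since the factor $f_1(n)^{-1}$ cancels in each comparison, both conditions are statements about the two-variable family $f_t(a)^{-1}f_t(n+a)$, and I will secure them by choosing the $k_i$ to avoid several sets that Lemma~\ref{lem-equi1} forces to be finite.

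For condition (1), set $K_t=\{k\in\Z:f_t(k)^{-1}f_t(n+k)=f_1(n)\text{ for all }n\}$ for $t\ge 2$. An infinite $K_t$ would give $f_t=f_1$ by Lemma~\ref{lem-equi1}(1), contradicting distinctness, so each $K_t$ is finite; hence (1) holds once every $k_i\notin\bigcup_{t\ge 2}K_t$. Likewise, the part of (2) with $i=j$ is exactly the requirement $k_i\notin D_{t,s}$, where $D_{t,s}=\{k:f_t(k)^{-1}f_t(n+k)=f_s(k)^{-1}f_s(n+k)\text{ for all }n\}$; an infinite $D_{t,s}$ would force $f_t=f_s$ by Lemma~\ref{lem-equi1}(2), so each $D_{t,s}$ is finite. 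Both are dispatched by demanding that $k_0$, and hence every $k_i$, exceed the maximum of all these finite sets, a requirement I will encode into $r(0)$.

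The substantial case is (2) with $i\ne j$, i.e.\ avoiding pairs in $B_{t,s}=\{(a,b)\in\Z^2:f_t(a)^{-1}f_t(n+a)=f_s(b)^{-1}f_s(n+b)\text{ for all }n\}$ with $t\ne s$. The heart of the argument is the claim that $B_{t,s}$ has finite fibers outside finitely many exceptional rows and columns. Fixing $a$ and putting $G(n)=f_t(a)^{-1}f_t(n+a)\in\PG_0$, the fiber $\{b:(a,b)\in B_{t,s}\}$ equals $\{b:f_s(b)^{-1}f_s(n+b)=G(n)\text{ for all }n\}$; by Lemma~\ref{lem-equi1}(1) this is infinite only if $f_t(a)^{-1}f_t(n+a)=G(n)=f_s(n)$ for all $n$, and a second application of Lemma~\ref{lem-equi1}(1) shows the set of such exceptional $a$ is finite, else $f_t=f_s$. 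The symmetric statement bounds the exceptional columns. I will again use $r(0)$ to guarantee that no $k_i$ falls in these finitely many exceptional rows or columns, so that for every $a=k_j$ the fibers $\{b:(k_j,b)\in B_{t,s}\}$ and $\{b:(b,k_j)\in B_{t,s}\}$ are finite.

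Finally I assemble $r$. Into $r(0)$ I place the maximum of all the $K_t$, all the $D_{t,s}$, and all the exceptional rows and columns, so that each $k_i$ automatically clears them. For general $x$ I let $r(x)$ dominate, over all $a\le x$ and all $t\ne s$, the finite bounds on the two fibers of $B_{t,s}$ through $a$; this is a genuine natural number precisely because of the finiteness just established. Then for any admissible sequence, condition (1) and the $i=j$ part of (2) hold by the clearing built into $r(0)$, while for $i\ne j$ the larger of $k_i,k_j$ exceeds $r$ evaluated at the smaller, hence lies beyond the corresponding finite fiber, ruling out any coincidence $(k_i,k_j)\in B_{t,s}$. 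The only delicate point is the fiber-finiteness claim of the previous paragraph; everything else is bookkeeping with the gap sequence, and I expect that finiteness claim to be the main obstacle.
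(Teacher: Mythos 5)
Your proof is correct and follows essentially the same route as the paper's: both reduce everything to the finiteness, guaranteed by Lemma~\ref{lem-equi1}, of the coincidence sets $K_t$, $D_{t,s}$ and the fibers of your $B_{t,s}$ (which are exactly the paper's sets $K_{t,s}(k')$), and then assemble $r$ by bookkeeping over these finite sets. The only cosmetic difference is that the paper pushes the argument one step further to show that for $t\neq s$ the fiber $K_{t,s}(k')$ is finite for \emph{every} $k'$ (so there are no exceptional rows or columns at all), whereas your definition of $r(x)$ as a maximum ``over all $a\le x$'' implicitly needs to be restricted to non-exceptional $a$ to be well-posed --- a trivial repair, since the $k_i$ are forced to be non-exceptional anyway.
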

\begin{proof} First $f_t(k)^{-1}f_t(n+k)f_1(n)^{-1}\in \PG_0$ for any $t\in  \{1,2,\cdots,v\}$ and $k\in \mathbb{Z}$.
Then by Lemma \ref{lem-equi1} (1), for  $t\in \{2,\ldots,v\}$
$$K_t:=\{ k\in \mathbb{Z}: f_t(k)^{-1}f_t(n+k)=f_1(n) \text{ for all }n\in \mathbb{Z}\}$$
is a finite set since $f_1\neq f_t$. Thus for  $t\in \{2,\ldots,v\}$ we may take $L_{t}\in \mathbb{N}$ such that
$K_t\subseteq [-L_t,L_t]$.

\medskip
For any  $t, s \in \{1,2,\cdots,v\}$ and $k'\in \mathbb{Z}$, we put
$$K_{t,s}(k'):=\{ k\in \mathbb{Z}: f_t(k)^{-1}f_t(n+k)=f_s(k')^{-1}f_s(n+k') \text{ for all }n\in \mathbb{Z}\}.$$
If $K_{t,s}(k')$ is an infinite set then by Lemma \ref{lem-equi1} (1) one has
$$f_s(k')^{-1}f_s(n+k')=f_t(n)=(f_t(1))^n$$
 for all $n\in \mathbb{Z}$. Take $n=-k'$ one has
$f_s(k')=(f_t(1))^{k'}$ as $f_s(0)=e_\Gamma$. Thus
$$f_s(m)=f_s(k')f_t(m-k')=(f_t(1))^{k'}(f_t(1))^{m-k'}=(f_t(1))^m=f_t(m)$$
for all $m\in \mathbb{Z}$. Hence $f_s=f_t$. This implies  $s=t$.

The above discussion shows that
$K_{t,s}(k')$ is a finite set for any  $t\neq s \in \{1,2,\cdots,v\}$ and $k'\in \mathbb{Z}$.
Thus for any  $t\neq s \in \{1,2,\cdots,v\}$ and $k'\in \mathbb{Z}$ we may take $L_{t,s}(k')\in \mathbb{N}$ such that
$K_{t,s}(k')\subseteq [-L_{t,s}(k'),L_{t,s}(k')]$.

Next by Lemma \ref{lem-equi1} (2), for  $t\neq s \in \{1,2,\cdots,v\}$
$$K_{t,s}:=\{ k\in \mathbb{Z}: f_t(k)^{-1}f_t(n+k)=f_s(k)^{-1}f_s(n+k) \text{ for all }n\in \mathbb{Z}\}$$
is a finite set since $f_t\neq f_s$. Thus for  $t\neq s \in \{1,2,\cdots,v\}$ we may take $L_{t,s}\in \mathbb{N}$ such that
$K_{t,s}\subseteq [-L_{t,s},L_{t,s}]$.

For $i\ge 0$, we take
$$r(i)=1+\max_{t\in \{2,\cdots,v\}} L_t+\max_{{t\neq s\in \{1,\ldots,v\}, \atop{k'\in \{0,1,\ldots,i\}}}}(L_{t,s}+L_{t,s}(k')).$$
Now for any given $\ell\in \mathbb{N}$ and $k_0,k_1,\cdots,k_\ell\in \mathbb{N}$ with $k_0>r(0)$ and $k_i>k_{i-1}+r(k_{i-1})$ for $i=1,\cdots, \ell$,
on the one hand for any $t\in  \{2,\cdots,v\}$ and $i\in \{0,1,\ldots,\ell\}$ one has $f_t(k_i)^{-1}f_t(n+k_i)f_1(n)^{-1}\in \PG_0^*$
since $k_i\not \in K_t$. One the other hand   for any $t\neq s \in \{1,2,\cdots,v\}$ and $0\le i\le j\le \ell$ one has $f_t(k_i)^{-1}f_t(n+k_i)$ and $f_s(k_j)^{-1}f_s(n+k_j)$ are distinct $\Gamma$-polynomials with respect to $n$ as $k_j\not \in K_{t,s}$ and $k_j\not \in \bigcup_{0\le r\le j-1} K_{t,s}(k_{r})$.
This clearly implies that $f_t(k_i)^{-1}f_t(n+k_i)f_1(n)^{-1}$ and $f_s(k_j)^{-1}f_s(n+k_j)f_1(n)^{-1}$ are distinct.
We finish the proof of the lemma.
\end{proof}

\subsection{Proof of Theorem \ref{stronger-thm}}\
\medskip

We will prove Theorem \ref{stronger-thm} using the PET-induction introduced in
Section \ref{section-nil}. We will use the notations in Section \ref{section-nil} freely.
Recall that $A=\{g_1,\ldots,g_k\}$.

\medskip

We start with the system whose weight vector is $\{1(1,1)\}$. That is, $A=\{S_1^{c_1n}\}$,
where $c_1\in \Z\setminus \{0\}$. Since $S_1^{c_1}\neq e_\Gamma$, $(X, S_1^{c_1})$ is weakly mixing and minimal. By Lemma \ref{lem-hy}
for any non-empty open sets $U_1$ and $V_1$ of $X$,
$$\{ n\in \mathbb{Z}: U_1\cap S_1^{-c_1n}V_1\neq \emptyset\}$$
is a thickly-syndetic set.
Hence $X$ is  $A$-thickly-syndetic transitive and $A_\D$-syndetic transitive.
%We note that to show Theorem \ref{main-thm} we may assume that
%the coefficients of the polynomials are integers.

\medskip

Now let $A\subseteq \PG_0^*$ be a system whose weight vector is greater than $\{1(1,1)\}$, and assume that for all systems $A'$ preceding $A$, we have $(X,\Gamma)$ is $A'$-thickly-syndetic transitive and $A'_\D$-syndetic transitive. Now we show that $(X,\Gamma)$ is $A$-thickly-syndetic transitive and $A_\D$-syndetic transitive.

\subsubsection{Claim: $(X,\Gamma)$ is $A$-thickly-syndetic transitive}\
\medskip

Since the intersection set of two thickly-syndetic subsets is still a thickly-syndetic subset, it is sufficient to show that for any $f\in A$, and for any given non-empty open subsets $U,V$ of $X$,
$$N_f(U,V):=\{n\in \mathbb{Z}: U\cap f(n)^{-1}V\neq \emptyset\}$$
is a thickly-syndetic set.

Let $T\in \Gamma$ be an element in the center of $\Gamma$ with $T\neq e_\Gamma$.
As $(X,T)$ is minimal there is $\ell\in\mathbb{N}$ such that $X=\bigcup_{i=0}^{\ell}T^{i}U$.
For given $f\in A$ and non-empty open subsets $U,V$ of $X$, we have the following two cases.

\medskip

\noindent{\bf Case 1}:
The first case is that there exists $m\in \mathbb{Z}\setminus \{0\}$ such that
$f(n+m)=f(m)f(n)$ for all $n\in \mathbb{Z}$. Thus  $f(-m)=(f(m))^{-1}$ and
$f(n-m)=f(-m)f(n)$ for all $n\in \mathbb{Z}$. Let $u=|m|$. Then $u>0$ and
$$f(n+u)=f(u)f(n) \text{ for all }n\in \mathbb{Z}.$$
This implies that for $r=0,1,\cdots,u-1$
$$f(ku+r)=(f(u))^k f(r) \text{ for all }k\in \mathbb{Z}.$$
Since $f\not \equiv e_\Gamma$ and $f(0)=e_\Gamma$,
one has that $f(u)\neq e_\Gamma$. Thus $(X,f(u))$ is weakly mixing and minimal.
Hence
$$B_r:=\{ k\in \mathbb{Z}: f(r)U\cap (f(u))^{-k}V\neq \emptyset\}$$
is a thickly-syndetic subset of $\mathbb{Z}$.

Put $B=\bigcap_{0\le r\le u-1} B_r$. Then $B$ is a thickly-syndetic subset of $\mathbb{Z}$.
Note that
$$N_f(U,V)\supseteq \bigcup_{0\le r\le u-1} \{ku+r:k\in B_r\}\supseteq\big \{ ku+r:k\in B, r\in \{0,1,\ldots,u-1\}\big\}.$$
Thus $N_f(U,V)$ is  a thickly-syndetic subset of $\mathbb{Z}$ as $B$ is a thickly-syndetic subset of $\mathbb{Z}$.

\medskip
\noindent{\bf Case 2}:
The second case is that for each $m\in \mathbb{Z}\setminus \{0\}$ there is  some $n=n(m)\in \mathbb{Z}$ such that $f(m+n)\neq f(m)f(n)$.
Fix $L\in \mathbb{N}$.  By Lemma \ref{lem-diff} for any $\ell,L\in \mathbb{N}$ we can find $k_0,k_1,\cdots,k_\ell\in \mathbb{N}$ such that
\begin{enumerate}
\item $f(k_i+j)^{-1}f(k_i+j+n)f(n)^{-1}\in \PG_0^*$ for any $(i,j)\in \{0,1,\ldots,\ell\}\times \{0,1,\ldots,L\}$.

\item $f(k_i+j)^{-1}f(k_i+j+n)f(n)^{-1}$ and $f(k_{i'}+j')^{-1}f(k_{i'}+j'+n)f(n)^{-1}$ are distinct $\Gamma$-polynomials with respect to $n$
for any $(i,j),(i',j')\in \{0,1,\ldots,\ell\}\times \{0,1,\ldots,L\}$ with $(i,j)\neq (i',j')$.
\end{enumerate}

Since $(X,T)$ is weakly mixing and minimal,
$$C:=\bigcap_{{(i,j)\in {\{0,1,\ldots,\ell\}\times \{0,1,\ldots,L\}}}} \{ k\in \mathbb{Z}: V\cap T^{-k}\big( (f(k_i+j)T^{-i})^{-1}V\big)\neq \emptyset\}$$
is a thickly-syndetic set.
Choose $a\in C$. Then for any $(i,j)\in \{0,1,\ldots,\ell\}\times \{0,1,\ldots,L\}$ one has
$$V_{i,j}:=V\cap (f(k_i+j)T^{a-i})^{-1}V$$
is a non-empty open subset of $V$ and
$$f(k_i+j)T^{a-i}V_{i,j}\subset V.$$

Write $p_{i,j}(n)=f(k_i+j)^{-1}f(k_i+j+n)f(n)^{-1}$ for any $(i,j)\in \{0,1,\ldots,\ell\}\times \{0,1,\ldots,L\}$.
Let $$A^L=\{ p_{i,j}: (i,j)\in \{0,1,\ldots,\ell\}\times \{0,1,\ldots,L\}\}.$$
Then $A^L\subset \PG_0^*$ is a system and by Corollary \ref{leibman}
$$\phi(A^L)\prec \phi(\{f\}) \preccurlyeq \phi(A).$$
Hence $A^L$ precedes $A$.
By the inductive assumption, $X$ is $A^L_\D$-syndetic transitive.
Thus
$$D:=\{n \in \mathbb{Z}: V\cap \bigcap_{(i,j)\in \{0,1,\ldots,\ell\}\times \{0,1,\ldots,L\}} (p_{i,j}(n))^{-1}V_{i,j}\neq \emptyset\}$$
is a syndetic set.

For $m\in D$, there exists $x_m\in V$ such that $p_{i,j}(m)x_m\in V_{i,j}$
for any $(i,j)\in \{0,1,\ldots,\ell\}\times \{0,1,\ldots,L\}$.
Let $y_m={f(m)^{-1}}x_m$. Since $X=\bigcup_{i=0}^{\ell}T^{i}U$, there are $z_m\in U$ and $0\le b_m\le \ell$ such that $T^{a}y_m=T^{b_m}z_m$.
Then $z_m=f(m)^{-1}T^{a-b_m}x_m$ and we have
\begin{equation*}
\begin{split}
{f(m+k_{b_m}+j)}z_m&=f(m+k_{b_m}+j)f(m)^{-1}T^{a-b_m}x_m\\
&  = f(k_{b_m}+j) T^{a-b_m}\big(f(k_{b_m}+j)^{-1}f(k_{b_m}+j+m)f(m)^{-1}x_m\big)\\
&=f(k_{b_m}+j) T^{a-b_m}(p_{k_{b_m},j}(m)x_m)\\
& \in f(k_{b_m}+j)T ^{a-b_m}V_{b_m,j}\subset  V
\end{split}
\end{equation*}
for each for $j\in \{0,1,\ldots,L\}$.
Thus $$\{ m+k_{b_m}+j:0\le j\le L\} \subset N_f(U,V).$$
Hence the set $\{ n\in \mathbb{Z}: n+j\in N_f(U,V) \text{ for any }j\in \{0,1,\ldots,L\}\}$
contains the syndetic set $\{m+k_{b_m}:m\in D\}$.
As $L\in \mathbb{N}$ is arbitrary, $N_f(U,V)$ is a thickly-syndetic subset of $\mathbb{Z}$.

\subsubsection{Claim: $(X,\Gamma)$ is $A_\D$-syndetic transitive}\
\medskip

Let $A=\{f_1,\ldots, f_v\}$. Then $f_1,f_2,\ldots, f_v$ are distinct $\Gamma$-polynomials.
It remains to prove that for any given non-empty open sets $U, V_1, \ldots, V_v$ of $X$
\begin{equation}\label{thm-4.2-1}
N_A(U,V_1,\ldots,V_v):=\{n\in \mathbb{Z}:U\cap {f_1(n)^{-1}}V_1\cap \ldots \cap {f_v(n)^{-1}}V_v\not=\emptyset\}
\end{equation}
is a syndetic set.
\medskip

Let $T\in \Gamma$ be an element from the center of $\Gamma$ with $T\neq e_\Gamma$. As $(X,T)$ is minimal,
there is some $\ell\in \N$ such that $X=\bigcup_{i=0}^\ell T^{i}U$. Let $f\in A, f\neq e_\Gamma$, be a $\Gamma$-polynomial of weight minimal in $A$:
$w(f)\le w(f_j)$ for any $j=1,\ldots,v$. Without loss of generality assume that $f=f_1$. By Lemma \ref{freedom}
and Lemma \ref{lem-equi2}, there are $\{k_j\}_{j=0}^\ell \subseteq \N$
and $V_t^{(\ell)}\subset V_t$ for $t=1,\ldots, v$ such that
\begin{enumerate}
\item  For $t=1,\ldots, v$, $f_t(k_j) T^{-j}V_t^{(\ell)}\subset V_t, \quad \forall \ 0\le j\le \ell.$

    \medskip

\item $f_t(k_j)^{-1}f_t(n+k_j)f(n)^{-1}$ and $f_s(k_i)^{-1}f_s(n+k_i)f(n)^{-1}$ are distinct $\Gamma$-polynomials with respect to $n$
for any $t\neq s \in \{1,2,\cdots,v\}$ and $i,j\in \{0,1,\ldots,\ell\}$.

\medskip

\item
$f_t(k_j)^{-1}f_t(n+k_j)f(n)^{-1}\in \PG_0^*$
for any $ t \in \{2,\cdots,v\}$ and $j\in \{0,1,\ldots,\ell\}$.
\end{enumerate}
\medskip

Since it may happen that for some $t$ there are $i\not= j$ with
$$ f_t(k_j)^{-1}f_t(n+k_j)f(n)^{-1}= f_t(k_i)^{-1}f_t(n+k_i)f(n)^{-1}$$ for all $n\in \mathbb{Z}$,
for each $t \in \{1, 2,\cdots,v\}$ let $I_t\subset \{0,1,\ldots,\ell\}$ such that the elements of
$\{f_t(k_j)^{-1}f_t(n+k_j)f(n)^{-1}: j\in I_t\}$ are distinct $\Gamma$-polynomial in $\PG_0^*$ and
$$\{f_t(k_j)^{-1}f_t(n+k_j)f(n)^{-1}: j\in I_t\}=\{f_t(k_j)^{-1}f_t(n+k_j)f(n)^{-1}: j=0, 1,\ldots, \ell\}\setminus \{e_\Gamma(n)\},$$
where $e_\Gamma(n)$ is the constant $\Gamma$-polynomial with value $e_\Gamma$. Note that $I_1=\emptyset$
if and only if $f_1(k_j)^{-1}f_1(n+k_j)f(n)^{-1}\equiv e_\Gamma$ for each $j\in \{0,1,\ldots,\ell\}$.
Moreover, by the above condition (3),
$|I_t|\ge 1$  and
$$\{f_t(k_j)^{-1}f_t(n+k_j)f(n)^{-1}: j\in I_t\}=\{f_t(k_j)^{-1}f_t(n+k_j)f(n)^{-1}: j=0, 1,\ldots, \ell\}$$
for any $t\ge 2$.

%and each element of $I_t$ is not equal to $\{e_\Gamma\}$
%Now there exists $I_1\subseteq \{0,1,\ldots,\ell\}$
%such that
%\begin{itemize}
%\item[a1).] $I_1=\emptyset$ iff $f_1(k_j)^{-1}f_1(n+k_j)f(n)^{-1}\equiv e_\Gamma$ for  each $j\in \{0,1,\ldots,\ell\}$.

%\medskip

%\item[b1).] $f_1(k_j)^{-1}f_1(n+k_j)f(n)^{-1}\in \PG_0^*$ for each $j\in I_1$.

%\medskip

%\item[c1).] for each $j\in \{0,1,\ldots,\ell\}$, either
%$f_1(k_j)^{-1}f_1(n+k_j)f(n)^{-1}\equiv e_\Gamma$ or
% we can find $i=i(j)\in I_1$ satisfying
%that $$ f_1(k_j)^{-1}f_1(n+k_j)f(n)^{-1}= f_1(k_i)^{-1}f_1(n+k_i)f(n)^{-1}$$ for all $n\in \mathbb{Z}$.

%\medskip

%\item[d1).] for $i,j\in I_1$ with $i\neq j$, one has
%$$f_1(k_j)^{-1}f_1(n+k_j)f(n)^{-1} \text{ and } f_1(k_i)^{-1}f_1(n+k_i)f(n)^{-1}$$ are distinct $\Gamma$-polynomials.
%\end{itemize}

%\medskip
%Next by the condition (3) above, for each $ t \in \{2,\cdots,v\}$, there exits $I_t\subseteq \{0,1,\ldots,\ell\}$
%such that
%\begin{itemize}
%\item[a).] $|I_t|\ge 1$ and $f_t(k_j)^{-1}f_t(n+k_j)f(n)^{-1}\in  \PG_0^*$ for $j\in I_t$.

%\medskip

%\item[b).] for each $j\in \{0,1,\ldots,\ell\}$ we can find $i=i(j)\in I_t$ satisfying
%that $$ f_t(k_j)^{-1}f_t(n+k_j)f(n)^{-1}= f_t(k_i)^{-1}f_t(n+k_i)f(n)^{-1}$$ for all $n\in \mathbb{Z}$.

%\medskip

%\item[c).] for $i,j\in I_t$ with $i\neq j$, one has
%$$f_t(k_j)^{-1}f_t(n+k_j)f(n)^{-1} \text{ and } f_t(k_i)^{-1}f_t(n+k_i)f(n)^{-1}$$ are distinct $\Gamma$-polynomials with respect to $n$.
%\end{itemize}

Let
\begin{equation*}
\begin{split}
  A'=\bigcup_{t=1}^v  \{f_t(k_j)^{-1}f_t(n+k_j)f(n)^{-1}: j\in I_t\}.
\end{split}
\end{equation*}
 Then $ A'\subseteq \PG_0^*$ and by Corollary \ref{leibman}, $A'$ precedes $A$.
According to the inductive assumption, $X$ is $A'_\D$-syndetic transitive. Hence
$$E:=\{ m\in \mathbb{Z}: V_1^{(\ell)}\cap \bigcap_{t=1}^v \bigcap_{j\in I_t} (f_t(k_j)^{-1}f_t(m+k_j)f(m)^{-1})^{-1}V_t^{(\ell)}\neq \emptyset\}$$
is a syndetic set.

For $m\in E$, there are $x_m\in V_1^{(\ell)}$  such that
$$f_t(k_i)^{-1}f_t(m+k_i)f(m)^{-1} x_m \in V_t^{(\ell)}, \text{ for any }\ 1\le t\le v,\ i\in I_t.$$
Moreover by the choice of $I_t$ and $x_m\in V_1^{(\ell)}$, one has that
$$f_t(k_j)^{-1}f_t(m+k_j)f(m)^{-1} x_m\in V_t^{(\ell)}, \text{ for any }\ 1\le t\le v,\ j\in \{0,1,\ldots,\ell\}.$$

\medskip
Let $y_m={f(m)^{-1}}x_m$. Since $X=\bigcup_{i=0}^{\ell}T^{i}U$, there is $z_m\in U$ and $0\le b_m\le \ell$ such that $y_m=T^{b_m}z_m$.
Then $z_m=T^{-b_m}f(m)^{-1}x_m$ and we have
\begin{equation*}
\begin{split}
{f_t(m+k_{b_m})}z_m&=f_t(m+k_{b_m})T^{-b_m}f(m)^{-1}x_m\\ &  = f_t(k_{b_m})T^{-b_m}\big(f_t(k_{b_m})^{-1}f_t(m+k_{b_m})f(m)^{-1}\big)x_m\\
& \in f_t(k_{b_m})T ^{-b_m}V_t^{(\ell)}\subset  V_t
\end{split}
\end{equation*}
for each $1\le t\le v$.
This implies that $$z_m\in  U\cap {f_1(n)^{-1}}V_1\cap \ldots \cap {f_v(n)^{-1}}V_v$$ with $n=m+k_{b_m}$.
Thus
$$N_A(U,V_1,\ldots,V_v)\supseteq \{m+k_{b_m}:m\in E\}$$
is a syndetic set. Hence the proof of the whole theorem is completed.

\subsection{Proof of Theorem \ref{ye00}}
\
\medskip

We have the following two cases:

\medskip
\noindent {\bf Case 1:}  $g(n)=(g(1))^n$ for any $n\in \mathbb{Z}$. Then $g(1)\neq e_\Gamma$ as $g \not \equiv e_\Gamma$.
Since $(X, g(1))$ is minimal,
for each $x\in X$ and each non-empty open subset $U$ of $X$, $\{n\in\Z: g(n)x\in U\}$ is syndetic.
%we are done.

\medskip
\noindent {\bf Case 2:} There exists $v\in \mathbb{Z}$ such that $g(v)\neq (g(1))^v$. Thus
$g(u+1)\neq g(u)g(1)$ for some $u\in \mathbb{Z}$.
Let $f(n)=g(n)^{-1}g(n+1)g(1)^{-1}$ for $n\in \mathbb{Z}$. Then $f(u)\neq e_\Gamma$ and so
$f\in \PG_0^*$.

Assume that the weight of the $\Gamma$-polynomial $f(n)=\prod_{j=1}^sS_j^{p_j(n)}$ is $(l,k)$.
Then $k>0$ and the degree of the integral polynomial $p_\ell$ is $k$  as $f\in \PG_0^*$.
Thus there exists $M\in \mathbb{N}$ such that $p_\ell$ is strictly monotone on $[M,+\infty)$.
Particularly, for any $i,j\ge M$ with $i\neq j$ we have $p_\ell(i)\neq p_\ell(j)$ and hence $f(i)\neq f(j)$ by Theorem \ref{thm-5.1}(2).

\medskip
By Theorem \ref{stronger-thm} for each given non-empty open subset $V$ of $X$,
%there is a non-empty subset $U'$ with $\overline{U'}\subset U$ and
$$F=:\{n\in\Z: V\cap g(n)^{-1}V\not=\emptyset\}=\{\ldots <n_{-1}<n_0<n_1<\ldots\}$$
is thickly syndetic, where we require $n_0\ge M$. Since $F$ is syndetic, there is $L(V)\in \mathbb{N}$
such that $$n_{i+1}-n_i\le L(V)$$ for $i\in \mathbb{N}$.
%For given $i\in\Z$ and $d\in\N$
Consider $\Gamma$-polynomials
$\{g(n_i)^{-1}g(n+n_i): i\in\Z\}$. Since $g\in \PG_0^*$, $$g^{-1}(n_i)g(n+n_i)\in \PG_0^*$$ for any $i\in \Z$.

Now for any $i,j\in \mathbb{N}$ with $i\neq j$, note that
\begin{align*}
g(n_i)^{-1}g(1+n_i)=f(n_i)g(1)\neq f(n_j)g(1)=g(n_j)^{-1}g(1+n_j),
\end{align*}
hence $g(n_i)^{-1}g(n+n_i)$ and $g(n_j)^{-1}g(n+n_j)$ are distinct $\Gamma$-polynomials in $\PG_0^*$.

For $d\in\mathbb{N}$, let $A_d(V)$ be the set of all points $y_d\in X$ such that we can find $m_1<\cdots<m_d\in\Z$
satisfying $g(m_j)y_d\in V$ for $j=1,\cdots,d$ and $m_{j+1}<m_{j}+L(V)$ for $j=1,\ldots,d-1$.

For any $i\in\Z$, let $V_i=V\cap g(n_i)^{-1}V$. Then $V_i$ is a non-empty open subset of $V$ and
$g(n_i)V_i\subset V$.
Let $U$ be a non-empty open subset of $X$.
Applying Theorem \ref{main-thm}, there are $y_d\in U$ and $l_d\in\Z$ such that
$g^{-1}(n_{j})g(l_d+n_{j})y_d\in V_{j}$ which implies that $g(l_d+n_{j})y_d\in V$ for $j=1,\ldots,d$.
Thus $y_d\in U\cap A_d(V)$.
%Assume that $\lim_{d\lra \infty} g(l_d+m_d^1)y_d=y$, then $y\in U$. It is clear that $\{n\in\Z: g(n)y\in U\}$ is syndetic.

By what we just proved, $A_d(V)$ is an open dense subset of $X$ as $U$ is arbitrary. Assume that $\{U_i\}$ is a base of the
topology of $X$ and $X_0=\bigcap_{i\in \N}\bigcap_{d\in\N} A_d(U_i)$. We claim $X_0$ is the set we need.
In fact, for any non-empty open subset $U$ of $X$, there is $i\in \N$ with $U_i\subset U$.
So for $x\in X_0$, $x\in  \bigcap_{d\in\N} A_d(U_i)$. Thus, for any $d\in \mathbb{N}$, there are $m_1^d<m_2^d<\cdots<m_d^d\in\Z$ such that
$m_{j+1}^d-m_j^d\le L(U_i)$ for $j=1,\cdots,d-1$ and
$$N_g(x,U)\supset N_g(x,U_i)\supset \bigcup_{d\in\N}\{ m_1^d,m_2^d,\cdots,m_d^d\}$$
i.e. $\{n\in\Z:g(n)x\in U\}$ is piecewise syndetic.
%For any $d\in\N$ let
%$$A_d=\{x\in X: \rho(g(m+m_j)x,x)<1/d\ \text{for some}\ i, m\in\Z, j=i,\ldots,i+d\}.$$
%By what we just proved, $A_d$ is open and dense. Set $X_0=\cap_{d\in\N}A_d$, then
%$X_0$ is a dense $G_\delta$ subset of $X$. It is easy to check that $X_0$ is the set we need.

%%%%%%%%%%%%%%%%%%%%%%%%%%%%%%%%%%%%%%%%%%%%%%%%%%%%%%%%%%%%%%%%%%%%%
%%%%%%%%%%%%%%%%%%%%%%%%%%%%%%%%%%%%%%%%%%%%%%%%%%%%%%%%%%%%%%%%%%%%%

%\bigskip

%\appendix

%%%%%%%%%%%%%%%%%%%%%%%%%%%%%%%%%%%%%%%%%%%%%%%%%%%%%%%%

\end{document}